\documentclass[11pt,a4paper]{article}

\usepackage[T1]{fontenc}
\usepackage[utf8]{inputenc}
\usepackage{lmodern}
\usepackage{amsmath,amssymb,amsthm,mathtools}
\usepackage[a4paper,left=27mm,right=27mm,top=24mm,bottom=25mm]{geometry}
\usepackage{microtype}
\usepackage{tikz}
\usepackage[hidelinks,hypertexnames=false]{hyperref}

\hypersetup{
  pdftitle={A Counterexample to the Closed Nodal-Line Conjecture for the Third Eigenfunction of the Planar Dirichlet Laplacian},
  pdfauthor={Zikang Deng},
  pdfsubject={Dirichlet Laplacian and closed nodal lines of a third eigenfunction},
  pdfkeywords={Dirichlet Laplacian, third eigenfunction, nodal line, thin-neck domain, mixed boundary conditions, spectral convergence}
}

\allowdisplaybreaks

\theoremstyle{definition}
\newtheorem{theorem}{Theorem}[section]
\newtheorem{lemma}[theorem]{Lemma}
\newtheorem{proposition}[theorem]{Proposition}
\newtheorem{corollary}[theorem]{Corollary}
\newtheorem{conjecture}[theorem]{Conjecture}
\theoremstyle{remark}
\newtheorem{remark}[theorem]{Remark}

\begin{document}

\begin{center}
{\bfseries\fontsize{16}{22}\selectfont
A Counterexample to the Closed Nodal-Line Conjecture for the Third
Eigenfunction of the Planar Dirichlet Laplacian\par}
\vspace{12pt}
{\fontsize{11}{14.4}\selectfont Zikang Deng\par}
{\fontsize{11}{14.4}\selectfont Beijing Normal University\par}
\vspace{28.4pt}
{\bfseries Abstract\par}
\end{center}
\vspace{-3pt}

In Conjecture 4.3 of volume 6 (2003) of the \emph{LMS Journal of Computation and Mathematics}, Levitin and Yagudin conjectured that if the third Dirichlet eigenvalue of a connected planar domain is simple, then it is impossible for all the nodal lines of a corresponding eigenfunction to be closed. We construct a counterexample to this conjecture. We prove that there exists a bounded connected planar domain $\Omega$ with $C^\infty$ boundary such that $\lambda_3(\Omega)$ is a simple eigenvalue and the entire interior nodal set of any corresponding nonzero real eigenfunction is compactly contained in $\Omega$; moreover, this nodal set consists of exactly two disjoint real-analytic simple closed curves.

The construction starts from the counterexample to Payne's conjecture given by D\"ahne, G\'omez-Serrano, and Hou: its second eigenvalue is simple, and its second eigenfunction is positive at an interior point and strictly negative on a Jordan curve surrounding that point. We take two mirror-image copies of this base domain and connect them by a symmetric thin neck. Reflection decomposes the full-domain spectrum into the Dirichlet--Neumann mixed spectrum and the full Dirichlet spectrum on a half-domain. Using a transverse Poincar\'e inequality and the min--max principle, we prove step by step that both spectral sequences converge to the spectrum of the base domain as the thin neck vanishes. Boundary unique continuation then shows that the mixed eigenvalue is strictly below the full Dirichlet eigenvalue of the same index, thereby placing the even second branch exactly at the third spectral position of the full domain. Local convergence of eigenfunctions preserves two sets of strict sign inequalities, while Courant's nodal domain theorem confines the entire nodal set to two fixed compact subdomains. Finally, a smooth inner exhaustion preserves the spectral gaps and the strict sign inequalities. The construction therefore disproves the Levitin--Yagudin conjecture stated above.

\noindent\textbf{Keywords:} Dirichlet Laplacian; third eigenfunction; nodal line; thin-neck domain; mixed boundary conditions; spectral convergence

\medskip
\noindent\textbf{2020 Mathematics Subject Classification:} 35P15; 35J05; 58J50

\section{Introduction}

Let $\Omega\subset\mathbb R^2$ be a bounded connected domain. We study the Dirichlet eigenvalue problem for a fixed membrane,
\begin{equation}
\begin{cases}
-\Delta u=\lambda u, & x\in\Omega,\\
u=0, & x\in\partial\Omega.
\end{cases}
\tag{1.1}
\end{equation}
The eigenvalues, listed with multiplicity, are
\begin{equation}
0<\lambda_1(\Omega)<\lambda_2(\Omega)\leq\lambda_3(\Omega)\leq\cdots.
\tag{1.2}
\end{equation}
Denote the corresponding real eigenfunctions by $u_1,u_2,\ldots$. For a nonzero eigenfunction $u$, let
\begin{equation}
\mathcal Z(u)=\{x\in\Omega:u(x)=0\}
\tag{1.3}
\end{equation}
be its interior nodal set; the connected components of $\Omega\setminus\mathcal Z(u)$ are called the nodal domains of $u$. Courant's nodal domain theorem asserts that an eigenfunction associated with $\lambda_k$ has at most $k$ nodal domains~\cite{CourantHilbert1953}.

The nodal set identifies the positions at which a vibrating membrane remains at rest in a given mode. Even for only the first few spectral positions, whether a nodal line meets the boundary is influenced simultaneously by the topology and symmetry of the domain and by eigenvalue multiplicity. Payne conjectured in 1967 that the nodal line of a second Dirichlet eigenfunction on a bounded planar domain must meet the boundary~\cite{Payne1967}. This statement holds for convex planar domains~\cite{Alessandrini1994,Melas1992}, but Hoffmann-Ostenhof, Hoffmann-Ostenhof, and Nadirashvili constructed a multiply connected counterexample~\cite{HoffmannOstenhofEtAl1997}. D\"ahne, G\'omez-Serrano, and Hou later obtained a counterexample with only six holes by means of rigorous computer-assisted estimates~\cite{DahneGomezSerranoHou2021}; Freitas and Leylekian subsequently obtained the corresponding phenomenon on a doubly connected domain~\cite{FreitasLeylekian2025}. These works show that, although a closed nodal line of a second eigenfunction is subject to strong restrictions, it can indeed occur on a general multiply connected domain.

In studying the range of the first three eigenvalues of the planar Dirichlet Laplacian, Levitin and Yagudin investigated $\lambda_2/\lambda_1$ and $\lambda_3/\lambda_1$ by combining shape perturbations with numerical experiments. They formulated the following conjecture in~\cite[Conjecture~4.3]{LevitinYagudin2003}.

\begin{conjecture}[Levitin--Yagudin]
Let $\Omega$ be a connected planar domain. If $\lambda_3(\Omega)$ is a simple Dirichlet eigenvalue, then it is impossible for all the nodal lines of a corresponding eigenfunction to be closed.
\end{conjecture}

Conjecture~1.1 is not equivalent to Payne's conjecture for the second eigenfunction. Courant's theorem only bounds the number of nodal domains of a third eigenfunction by three; it does not rule out the possibility that two interior closed nodal lines divide the domain into three nodal domains. On the other hand, directly juxtaposing two Payne counterexamples produces a disconnected domain and usually introduces spectral multiplicity; after the two parts are connected by a thin neck, both the ordering of the low-lying eigenvalues and the location of the nodal set may change. A counterexample must therefore solve three problems simultaneously: identify the third spectral position after the domains are connected, prove that this eigenvalue is simple, and prevent the corresponding nodal lines from entering the thin neck or meeting the outer boundary.

Our main result is the following.

\begin{theorem}
There exists a bounded connected planar domain $\Omega\subset\mathbb R^2$ with $C^\infty$ boundary such that
\begin{equation}
\lambda_2(\Omega)<\lambda_3(\Omega)<\lambda_4(\Omega).
\tag{1.4}
\end{equation}
If $U$ is any nonzero real eigenfunction associated with $\lambda_3(\Omega)$, then
\begin{equation}
\mathcal Z(U)\Subset\Omega.
\tag{1.5}
\end{equation}
Moreover, $\mathcal Z(U)$ consists of exactly two disjoint real-analytic simple closed curves.
\end{theorem}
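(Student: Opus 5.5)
The construction follows the outline in the abstract. Let $D$ be the Dähne--Gómez-Serrano--Hou domain. We use that $\lambda_1(D)<\lambda_2(D)<\lambda_3(D)$ and that a second eigenfunction $\phi$ satisfies $\phi(p)>0$ at an interior point $p$ and $\phi<0$ on a Jordan curve $\gamma\Subset D$ surrounding $p$. Place $D$ in the half-plane $\{x_1<0\}$ so that a flat piece of $\partial D$ lies on $\{x_1=-a\}$. Let $D'$ be its mirror image under $R(x_1,x_2)=(-x_1,x_2)$, and set
\begin{equation*}
\Omega_\varepsilon=D\cup D'\cup N_\varepsilon,
\end{equation*}
where $N_\varepsilon$ is a straight neck of width $\varepsilon$, symmetric under $R$. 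The spectrum of $\Omega_\varepsilon$ splits into even eigenfunctions and odd eigenfunctions. The even ones correspond to the mixed problem on the half-domain $H_\varepsilon=\Omega_\varepsilon\cap\{x_1<0\}$, with Neumann condition on the cut $\{x_1=0\}$; call these eigenvalues $\mu_k(\varepsilon)$. The odd ones correspond to the full Dirichlet problem on $H_\varepsilon$; call these $\nu_k(\varepsilon)$.

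\textbf{Step 1: spectral convergence.} I will show $\mu_k(\varepsilon)\to\lambda_k(D)$ and $\nu_k(\varepsilon)\to\lambda_k(D)$ for $k\le 3$.
For the upper bounds, extend eigenfunctions of $D$ by zero into $H_\varepsilon$; they are admissible test functions in both problems.
For the lower bounds, use a transverse Poincaré inequality on the half-neck: for $u$ vanishing on its long sides,
\begin{equation*}
\int_{\text{neck}}|u|^2\le C\varepsilon^2\int_{\text{neck}}|\nabla u|^2 .
\end{equation*}
So normalized eigenfunctions carry $o(1)$ mass in the neck. Cutting them off near the junction, with a capacity-type estimate in 2D, gives near-orthogonal test functions on $D$ whose Rayleigh quotients are at most $\mu_k+o(1)$. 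Min--max then gives the lower bound, and the same compactness argument gives $L^2$ and local $C^\infty$ convergence of eigenfunctions on compact subsets of $D$.

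\textbf{Step 2: ordering and simplicity.} Since $\mu_k\le\nu_k$ by form inclusion, I need strictness. If $\mu_k(\varepsilon)=\nu_k(\varepsilon)$, a comparison argument would produce a mixed eigenfunction vanishing on the Neumann cut, hence with zero Cauchy data there. Boundary unique continuation then forces it to vanish identically, a contradiction. This gives $\mu_k<\nu_k$.
With the gaps of $D$ and Step 1, for small $\varepsilon$ the six lowest eigenvalues of $\Omega_\varepsilon$ cluster as
\begin{equation*}
\mu_1<\nu_1\ll\mu_2<\nu_2\ll\mu_3,\nu_3 ,
\end{equation*}
and the mixed and Dirichlet spectra are each simple at these indices. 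Hence $\lambda_3(\Omega_\varepsilon)=\mu_2(\varepsilon)$, it is simple, and its eigenfunction $U$ is even.

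\textbf{Step 3: location of the nodal set.} Normalize $U$ so that $U|_{D}\to\phi$ locally uniformly. Then for small $\varepsilon$ we have $U(p)>0$ and $U<0$ on $\gamma$, and by symmetry $U(Rp)>0$ and $U<0$ on $R\gamma$.
Let $G$ be the region outside $\gamma\cup R\gamma$. The part of $G$ where $U<0$ containing $\gamma$ is connected to $R\gamma$ only through the neck, so this needs care. The interiors of $\gamma$ and $R\gamma$ each contain a positive nodal domain, and some negative nodal domain meets $\gamma$. Courant allows at most three nodal domains, so there are exactly these three, and the negative one contains both $\gamma$ and $R\gamma$ (by evenness).
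Consequently $U\le 0$ outside the two disks bounded by $\gamma$ and $R\gamma$. The strong maximum principle upgrades this to $U<0$ there, so
\begin{equation*}
\mathcal Z(U)\subset \operatorname{int}\gamma\cup\operatorname{int}R\gamma\Subset\Omega .
\end{equation*}
Inside $\gamma$ the nodal set is the boundary of the single positive nodal domain. To show it is one analytic simple closed curve, I need the absence of singular points, i.e. $\nabla U\neq0$ on $\mathcal Z(U)$. I expect this to be the main obstacle. My first attempt is a topological argument: at a singular nodal point, at least four nodal arcs would meet, creating extra nodal domains and contradicting the count of three. Once singular points are excluded, the nodal set is a compact embedded analytic 1-manifold. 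It separates the positive domain from the connected negative domain, so it is a single closed curve in each disk.

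\textbf{Step 4: smoothing.} $\Omega_\varepsilon$ has corners where the neck meets $D$ and $D'$. Take a smooth symmetric inner exhaustion $\Omega^{(j)}\uparrow\Omega_\varepsilon$. Dirichlet eigenvalues converge under inner exhaustion (domain monotonicity plus density of $C_c^\infty$), and eigenfunctions converge locally uniformly. So for large $j$ the gaps in (1.4) and the sign conditions at $p,Rp,\gamma,R\gamma$ persist, and Step 3 applies verbatim to $\Omega^{(j)}$. The other delicate point is the strict mixed/Dirichlet gap in Step 2, since the naive equality-case argument must be made rigorous through the min--max characterization.
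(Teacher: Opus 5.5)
\textbf{Overall.} Your outline is the paper's construction, and Steps 1, 2 and 4 are sound in outline: reflection splitting into a mixed Neumann--Dirichlet problem and a full Dirichlet problem on the half-domain; convergence of both sequences to the base spectrum via the transverse Poincar\'e inequality and min--max; strict separation $\mu_k<\nu_k$ from zero Cauchy data on the cut plus unique continuation; persistence of the strict signs; the Courant count; and an inner exhaustion.

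\textbf{Step 1 is a different route.} The paper uses no cutoff. It restricts the first $j$ eigenfunctions to the copy of $A$, extracts weak $H^1$ limits, and shows these lie in $H_0^1(A)$ because their traces vanish off a window whose length tends to zero. Your capacity cutoff also works in two dimensions. It additionally needs a uniform $L^\infty$ bound on the eigenfunctions to control $\int w^2|\nabla\chi|^2$; this is available by reflecting to $\Omega_\varepsilon$ and using heat-kernel domination.

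\textbf{Minor point in Step 3.} That the single negative nodal domain contains both $\gamma$ and $R\gamma$ follows from the count of three, not from evenness.

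\textbf{The gap: excluding singular zeros in Step 3.} The inference ``at a singular zero at least four arcs meet, creating extra nodal domains, contradicting the count of three'' is not valid on its own. A figure-eight configuration (two positive lobes meeting at one vertex of degree $4$, negative outside) has a singular zero and exactly three nodal domains.

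What excludes it here is the global structure you already derived: $\operatorname{int}\gamma$ contains exactly one positive nodal domain $P_+$, and the negative nodal domain $D_-$ is unique. Suppose $x_0\in\operatorname{int}\gamma$ were a zero of order $m\ge2$. Take two positive sectors $S_1,S_3$ at $x_0$ and the negative sectors $S_2,S_4$ between them. Both positive sectors lie in $P_+$, so join them by a simple path in $P_+$ and close up at $x_0$. This gives a Jordan curve $L\subset P_+\cup\{x_0\}$ that locally puts $S_2$ and $S_4$ on opposite sides, hence in different components of $\mathbb R^2\setminus L$. But both lie in the connected set $D_-$, which misses $L$, a contradiction.

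The paper reaches the same conclusion by an Euler count instead. The nodal set $Z$ is a finite planar graph with all degrees at least $2$. Its complement $\mathbb R^2\setminus Z$ has exactly $F=3$ components: $P_+$, $P_-$, and $D_-$ merged with the exterior and the holes. $Z$ has $c\ge2$ components because each $Q_\pm$ contains a zero. Then $V-E+F=1+c$ together with $E\ge V$ forces $c=2$, $E=V$, and every vertex of degree $2$.

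Either way, the essential input is one positive nodal domain per disk (equivalently $c\ge2$), not merely the bound of three. Once singular zeros are excluded this way, your last step is correct: the regular nodal set is a single closed curve in each disk, since a second Jordan component in $\operatorname{int}\gamma$ would also have to separate $P_+$ from $D_-$.
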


\begin{corollary}
The Levitin--Yagudin Conjecture~1.1 is false.
\end{corollary}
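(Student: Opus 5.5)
The plan follows the construction in the abstract. Let $D$ be the Dähne--Gómez-Serrano--Hou domain. Its eigenvalue $\lambda_2(D)$ is simple, and its eigenfunction $\varphi$ satisfies $\varphi(p)>0$ at an interior point $p$ and $\varphi<0$ on a Jordan curve $\Gamma\Subset D$ surrounding $p$. Normalize so that $\lambda_1(D)<\lambda_2(D)<\lambda_3(D)$. Place $D$ in the half-plane $\{x_1<0\}$ at positive distance from the axis, and let $D^*$ be its mirror image under $R(x_1,x_2)=(-x_1,x_2)$. Join $D$ and $D^*$ by a straight horizontal channel $N_\varepsilon$ of width $\varepsilon$, symmetric about $\{x_1=0\}$ and attached to $D$ far from $\Gamma$. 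Set
\[
\Omega_\varepsilon=D\cup D^*\cup N_\varepsilon,
\]
with the corners smoothed symmetrically.

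Reflection splits $L^2(\Omega_\varepsilon)$ into even and odd parts. On the half-domain $H_\varepsilon=\Omega_\varepsilon\cap\{x_1<0\}$, the even spectrum is the mixed problem with Neumann data on the cut $\{x_1=0\}\cap N_\varepsilon$; call its eigenvalues $\mu_k(\varepsilon)$. The odd spectrum is the full Dirichlet spectrum of $H_\varepsilon$; call its eigenvalues $\nu_k(\varepsilon)$.

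\textbf{Step 1 (spectral convergence).} I will show that $\mu_k(\varepsilon)\to\lambda_k(D)$ and $\nu_k(\varepsilon)\to\lambda_k(D)$ for $k=1,2,3$. The upper bounds are immediate, since $H^1_0(D)$ embeds in both form domains, so min--max gives $\mu_k,\nu_k\le\lambda_k(D)$. For the lower bound, take a normalized eigenfunction on $H_\varepsilon$. On the half-channel of width $\varepsilon$, the transverse Poincaré inequality gives
\[
\|u\|_{L^2(N_\varepsilon)}^2\le C\varepsilon^2\|\partial_{x_2}u\|^2 .
\]
So the mass and energy in the channel are $O(\varepsilon^2)$. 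Cut the eigenfunction off at the channel mouth using a cutoff whose gradient acts only in the $x_1$ direction over a fixed length. This produces $k$-dimensional trial spaces in $H^1_0(D)$ with Rayleigh quotients at most $\mu_k+o(1)$, and min--max on $D$ gives the lower bound.

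\textbf{Step 2 (ordering).} I will show $\mu_k(\varepsilon)<\nu_k(\varepsilon)$ by strict domain monotonicity. The form domain for $\nu_k$ is contained in the one for $\mu_k$, which gives $\le$. If equality held, unique continuation from the Neumann cut would produce a contradiction: an extremal function satisfies zero Dirichlet and zero Neumann data on that piece of boundary, so it vanishes identically. Combining with Step 1 and the gap $\lambda_1(D)<\lambda_2(D)<\lambda_3(D)$, for small $\varepsilon$ the spectrum of $\Omega_\varepsilon$ begins
\[
\mu_1<\nu_1<\mu_2<\nu_2<\min(\mu_3,\nu_3),
\]
where $\mu_1<\nu_1<\mu_2$ is needed. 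Thus $\lambda_3(\Omega_\varepsilon)=\mu_2(\varepsilon)$ is simple, and its eigenfunction $U$ is even.

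\textbf{Step 3 (nodal set).} Normalize $U$ on $H_\varepsilon$. Since $\lambda_2(D)$ is simple, Step 1 implies that $U|_D\to\pm\varphi$ in $L^2$. Interior elliptic estimates on compact subsets of $D$ upgrade this to $C^0$ convergence on $\{p\}\cup\Gamma$. Hence $U(p)>0$ and $U<0$ on $\Gamma$, and likewise on $R\Gamma$ by evenness. The open set $\Omega_\varepsilon\setminus(\mathrm{int}\,\Gamma\cup\mathrm{int}\,R\Gamma)$ is connected and contains $\Gamma\cup R\Gamma$, where $U<0$. The interior regions $\mathrm{int}\,\Gamma$ and $\mathrm{int}\,R\Gamma$ each contain a positive point, so they already contain at least two distinct nodal domains. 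Since Courant allows at most three, the region where $U<0$ connected to $\Gamma$ must be the single remaining nodal domain, and it must contain everything outside $\mathrm{int}\,\Gamma\cup\mathrm{int}\,R\Gamma$. Otherwise a fourth nodal domain appears, because any additional sign region outside would be a separate component.

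It follows that $\mathcal Z(U)\subset\mathrm{int}\,\Gamma\cup\mathrm{int}\,R\Gamma$, so $\mathcal Z(U)\Subset\Omega_\varepsilon$, and there are exactly three nodal domains. Inside $\Gamma$ there is exactly one positive nodal domain, namely a single component with zero boundary values compactly contained in the domain. Its boundary is a closed nodal set. Near degenerate critical points the nodal set would form crossings and generate extra domains, or equal-sign contacts, so a Euler-characteristic/Courant count excludes singular points of the nodal set. Hence the boundary is a real-analytic simple closed curve.

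\textbf{Step 4 (smoothness).} $\Omega_\varepsilon$ is at best Lipschitz at the channel junctions, so I would replace it by a smooth symmetric inner exhaustion $\Omega^{(j)}\uparrow\Omega_\varepsilon$. Eigenvalues converge by continuity of the Dirichlet spectrum under inner monotone approximation, so the strict gaps persist. Eigenfunctions converge locally uniformly, so the sign conditions on $p,\Gamma$ persist. The Step 3 argument then applies verbatim. Alternatively, smoothing the corners from the start avoids this step.

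I expect the hardest part to be ruling out singular points of $\mathcal Z(U)$ and non-simple curves. If the nodal-domain count alone does not suffice, the fallback is $C^1$ convergence of $U\to\varphi$ on compacts, together with nondegeneracy of $\nabla\varphi$ on $\varphi^{-1}(0)$. That nondegeneracy is a property of the base eigenfunction that would need to be taken from the Dähne--Gómez-Serrano--Hou analysis.
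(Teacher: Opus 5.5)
Your overall architecture matches the paper's: mirror copies joined by a thin channel, the even/odd splitting into the Neumann-cut and Dirichlet half-domain problems, convergence of both sequences to the base spectrum, strict separation at index $2$ by unique continuation, and a Courant count. The genuine gap is the lower bound in Step~1, which is what places $\mu_2$ at the third position.

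A cutoff depending only on $x_1$ and supported in the channel makes the function vanish on the symmetry cut. At best this gives $\nu_k\le\mu_k+o(1)$. The resulting functions are still supported in $D$ plus the half-channel, so they are not admissible in the min--max for $D$. If instead your $x_1$-cutoff vanishes on the mouth, it also vanishes on a strip of $D$ of fixed width along the whole attaching side, and the error is $O(1)$, not $o(1)$. So the real content, $\liminf_{\varepsilon\downarrow0}\nu_k(\varepsilon)\ge\lambda_k(D)$, is never established: a thin Dirichlet appendage must not lower the spectrum in the limit. Without it you cannot guarantee $\nu_1<\mu_2<\mu_3$.

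The paper avoids cutoffs entirely:
\begin{itemize}
\item it restricts the first $k$ orthonormal eigenfunctions to $D$ and extracts weak $H^1(D)$ limits;
\item these limits lie in $H_0^1(D)$, because their traces are bounded in $L^2(\partial D)$ and supported in a window whose length tends to $0$;
\item orthonormality survives because the channel mass is $O(\varepsilon^2)$;
\item weak lower semicontinuity of the energy then gives the bound.
\end{itemize}
A cutoff proof can be made to work, but it needs a two-dimensional logarithmic cutoff around the window, with energy of order $1/\log(1/\varepsilon)$, together with a uniform $L^\infty$ bound on the eigenfunctions. The same compactness step, plus simplicity of $\lambda_2(D)$, is what yields $U|_D\to\pm\varphi$ in Step~3; you cannot read that off from eigenvalue convergence alone. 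Also, the transverse Poincar\'e inequality controls only the channel mass, not the channel energy.

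Smaller points:
\begin{itemize}
\item In Step~3, ``any additional sign region outside would be a separate component'' does not exclude a zero outside $\mathrm{int}\,\Gamma\cup\mathrm{int}\,R\Gamma$ near which $U\le0$. You need that every interior zero is sign-changing (strong maximum principle plus unique continuation, the paper's Lemma~2.1). This gives $\mathcal Z(U)\subset\overline{\{U>0\}}$.
\item In Step~2, if $\mu_2=\nu_2=\alpha$ you must first produce one function that is an $\alpha$-eigenfunction of both problems. The paper takes $f$ in the span of the first two Dirichlet eigenfunctions, orthogonal to all mixed eigenfunctions below $\alpha$, and squeezes its Rayleigh quotient from both sides.
\item No nondegeneracy of $\nabla\varphi$ from D\"ahne--G\'omez-Serrano--Hou is needed. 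The paper treats $\mathcal Z(U)$ as a finite planar graph with all degrees even and at least $2$. It shows $\mathbb R^2\setminus\mathcal Z(U)$ has exactly three components, since the negative domain absorbs a collar of $\partial\Omega_\varepsilon$ and hence the exterior and all holes. It also shows the graph has at least two components. Euler's formula $V-E+F=1+c$ with $E\ge V$ then forces $c=2$ and all degrees equal to $2$.
\item For the corollary itself, Step~4 is optional, since the conjecture imposes no boundary regularity.
\end{itemize}
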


The proof starts from a base domain $A$ whose second eigenvalue is simple and whose second eigenfunction satisfies a strict interior sign condition. We take two mirror-image copies of $A$ and connect them by a symmetric thin neck of width $2\varepsilon$; denote the resulting domain by $\Omega_\varepsilon$. If $H_\varepsilon$ is the right half-domain and $S_\varepsilon$ is the cross-section in the thin neck, then even eigenfunctions on the full domain induce a Neumann condition on $S_\varepsilon$, whereas odd eigenfunctions induce a Dirichlet condition. Denote the two sequences of half-domain eigenvalues by $\nu_j(\varepsilon)$ and $\delta_j(\varepsilon)$. We shall prove that
\begin{equation}
\nu_j(\varepsilon),\ \delta_j(\varepsilon)\longrightarrow a_j
\qquad(\varepsilon\downarrow0),
\tag{1.6}
\end{equation}
where $a_j$ is the $j$th Dirichlet eigenvalue of $A$. At the same time, for every fixed $\varepsilon>0$ and $j\geq1$, we have the strict inequality
\begin{equation}
\nu_j(\varepsilon)<\delta_j(\varepsilon).
\tag{1.7}
\end{equation}
Since $a_1<a_2<a_3$, it follows that
\begin{equation}
\nu_1<\delta_1<\nu_2<\min\{\delta_2,\nu_3\},
\tag{1.8}
\end{equation}
and hence the third eigenvalue of the full domain is precisely the simple even branch $\nu_2$.

The second eigenfunction of the base domain is positive at a point $p$ and strictly negative on a Jordan curve $\Gamma$ surrounding $p$. Local uniform convergence preserves these inequalities in the two mirror-image copies of the base domain, thereby producing two distinct positive nodal domains compactly contained in the full domain; the negative values on the curves also produce a negative nodal domain. Courant's theorem bounds the total number of nodal domains by three. Consequently, any zero lying outside the two Jordan domains would produce a new positive nodal domain in its neighborhood, a contradiction. This first shows that the entire nodal set is compactly contained in the domain. The finite analytic graph structure of an interior nodal set in two dimensions, together with Euler's formula, then shows that the nodal set consists of exactly two regular simple closed curves. Finally, we fix one such polygonal domain and approximate it by a connected $C^\infty$ inner exhaustion; the spectral gaps and strict sign inequalities persist for all sufficiently large exhaustion indices.

The organization of the paper follows the compact arrangement of the original paper in which the problem was posed. Section~2 records the variational framework and the sign-changing property of interior zeros, and extracts the required base domain from a known Payne counterexample. Section~3 constructs the symmetric thin-neck domain and proves the even--odd spectral decomposition. Section~4 gives a complete proof of spectral and eigenfunction convergence in the thin-neck limit. Section~5 proves strict separation of the mixed spectra and identifies the third spectral position. Section~6 controls the nodal domains and the topology of the nodal set. Section~7 completes the proof of Theorem~1.2 by a smooth inner exhaustion. We emphasize that the statement disproved here is Conjecture~4.3 of~\cite{LevitinYagudin2003}; the present counterexample does not pass judgment on the other assertions in that paper concerning domains that locally maximize spectral ratios.

\section{Preliminaries and the base domain}

\subsection{Variational formulation and nodal domains}

For a bounded connected Lipschitz domain $G\subset\mathbb R^2$, define on $H_0^1(G)$
\begin{equation}
q_G[w]=\int_G|\nabla w|^2\,dx,
\qquad
R_G[w]=\frac{q_G[w]}{\displaystyle\int_G|w|^2\,dx}
\quad(w\neq0).
\tag{2.1}
\end{equation}
The Dirichlet eigenvalues, counted with multiplicity, satisfy the min--max formula
\begin{equation}
\lambda_j(G)
=\min_{\substack{E\subset H_0^1(G)\\ \dim E=j}}
\ \max_{0\neq w\in E}R_G[w].
\tag{2.2}
\end{equation}
Since $G$ is connected, $\lambda_1(G)$ is simple, and the first eigenfunction may be chosen to be strictly positive.

The following elementary property will be used later to exclude zeros outside the two Jordan domains.

\begin{lemma}[An interior zero must be sign-changing]
Let $G\subset\mathbb R^2$ be a domain, and suppose that $u\not\equiv0$ satisfies $-\Delta u=\lambda u$ in $G$, where $\lambda>0$. If $x_0\in G$ and $u(x_0)=0$, then every neighborhood of $x_0$ contains both a point at which $u>0$ and a point at which $u<0$.
\end{lemma}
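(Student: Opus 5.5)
We argue by contradiction, using only two facts: eigenfunctions are real-analytic in the interior, and $\lambda>0$. Suppose that some disc $B=B_r(x_0)\subset G$ contains no point where $u<0$; the case with no point where $u>0$ is symmetric after replacing $u$ by $-u$. Then $u\ge0$ in $B$, $u(x_0)=0$, and
\[
\Delta u=-\lambda u\le0 \quad\text{in } B,
\]
so $u$ is superharmonic in $B$. Since $u\ge0$ and $-\Delta u=\lambda u\ge0$, we may write $-\Delta u+c\,u=(\lambda+c)u\ge0$ with any constant $c\ge0$; taking $c=0$ suffices.

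The main step is the strong minimum principle for superharmonic functions on the connected open set $B$. The function $u$ attains its minimum value $0$ at the interior point $x_0$, so $u\equiv0$ on $B$. A self-contained route is the mean-value inequality $u(x_0)\ge\frac{1}{|B_\rho|}\int_{B_\rho(x_0)}u$ for superharmonic $u$ and every $\rho<r$. Together with $u\ge0$ and $u(x_0)=0$, it forces $u=0$ a.e. on each $B_\rho(x_0)$, hence everywhere by continuity.

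Next we globalize. Since $u$ solves $-\Delta u=\lambda u$ with constant coefficients, it is real-analytic in $G$ by elliptic regularity. It vanishes on the open disc $B$, and $G$ is connected, so the identity theorem for real-analytic functions gives $u\equiv0$ in $G$. This contradicts $u\not\equiv0$. Alternatively, one can invoke the unique continuation property for $-\Delta-\lambda$. Hence every neighborhood of $x_0$ contains a point with $u<0$, and by symmetry also a point with $u>0$.

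No step is a serious obstacle. The one point needing care is the sign of $\lambda$: the hypothesis $\lambda>0$ combined with $u\ge0$ is exactly what makes $u$ superharmonic and so lets the minimum principle apply. If $\lambda$ had the other sign, one would instead use Hopf's strong maximum principle for $-\Delta+c$ with $c\ge0$ applied to nonnegative supersolutions, which covers both signs anyway. Everything else is standard regularity.
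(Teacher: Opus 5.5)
Your proposal is correct and follows the paper's proof: assume $u\ge0$ (or $u\le0$) on a small disc about $x_0$, apply the strong minimum principle to the superharmonic function $u$ to get $u\equiv0$ on that disc, then globalize to $G$ for a contradiction. The only differences are cosmetic. You justify the minimum principle through the mean-value inequality, and you globalize via real-analyticity and the identity theorem instead of citing unique continuation; for this constant-coefficient equation these amount to the same thing.
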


\begin{proof}
Choose $r>0$ such that $B_r(x_0)\Subset G$. If $u\geq0$ in one such ball, then
\[
-\Delta u=\lambda u\geq0,
\]
and $u$ attains its minimum value $0$ at the interior point $x_0$. The strong minimum principle implies that $u$ vanishes identically in $B_r(x_0)$; unique continuation for elliptic equations then implies that $u\equiv0$ in $G$, a contradiction. The same argument applied to $-u$ rules out the possibility that $u\leq0$ in some ball. Both signs therefore occur in every neighborhood.
\end{proof}

We shall also use the standard local structure of nodal sets in two dimensions. If $u$ satisfies $-\Delta u=\lambda u$ in a planar domain, then $u$ is real analytic in the interior. Near a regular zero, its nodal set is a real-analytic arc. If $x_0$ is a singular zero of vanishing order $m\geq2$, then, in suitable local coordinates, the lowest-order term is a nonzero homogeneous harmonic polynomial of degree $m$. Consequently, exactly $2m$ nodal arcs meet at $x_0$, and the signs in adjacent sectors alternate. This result and its finite-graph consequence for compact nodal sets may be found in Cheng~\cite{Cheng1976}. In Section~6 we shall give the graph-theoretic counting needed here in full.

\subsection{A base domain with a strict interior sign condition}

We use only a previously proved result from the literature as an input; all subsequent thin-neck and spectral-ordering arguments are given in this paper.

\begin{proposition}[Base domain]
There exists a bounded connected polygonal domain $A\subset\mathbb R^2$ whose Dirichlet spectrum
\begin{equation}
0<a_1<a_2<a_3\leq\cdots
\tag{2.3}
\end{equation}
is simple at the second spectral position. There exist a real eigenfunction $\phi$ associated with $a_2$, a polygonal Jordan domain $Q\Subset A$, a point $p\in Q$, and $\Gamma=\partial Q$ such that
\begin{equation}
\phi(p)>0,
\qquad
\max_{x\in\Gamma}\phi(x)<0.
\tag{2.4}
\end{equation}
\end{proposition}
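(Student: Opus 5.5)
The proposition is imported from the literature, so the plan is to extract it from the computer-assisted counterexample to Payne's conjecture of D\"ahne, G\'omez-Serrano, and Hou~\cite{DahneGomezSerranoHou2021}, rather than to prove anything new. The domain $A$ is their polygonal domain: a regular polygon with six symmetrically placed polygonal holes. It is bounded, connected and polygonal by construction. Their main theorem gives two things. First, it gives rigorous enclosures for the first three Dirichlet eigenvalues; they verify $a_2<a_3$ through a validated lower bound for $a_3$ that is separated from the enclosure of $a_2$. Simplicity of $a_1$ is automatic by connectedness, as recorded after (2.2). Second, it shows that the nodal line of the second eigenfunction $\phi$ does not touch $\partial A$. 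So the first step is to quote (2.3) directly from their eigenvalue enclosures.

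The second step extracts the strict sign condition (2.4) from their description of $\phi$. Their argument proves that $\phi$, normalized suitably, is strictly negative in a neighbourhood of the outer boundary and the holes, and positive somewhere in the centre. More precisely, they verify $\phi<0$ on an explicit closed polygonal curve $\Gamma$ that separates a central region from $\partial A$, and $\phi>0$ at a central point. They obtain this through interval-arithmetic bounds on an approximate eigenfunction together with an a posteriori error bound in the supremum norm. I would take $Q$ to be the polygonal Jordan domain bounded by this curve and $p$ to be the central point where positivity is certified. If their paper only states positivity of $\phi$ at an interior point and negativity on a union of sets near the boundary, I would build a polygonal Jordan curve $\Gamma\Subset A$ inside the certified negative region and surrounding $p$. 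Because $\Gamma$ is compact and $\phi$ is continuous, $\max_\Gamma\phi<0$ then follows automatically from pointwise negativity. If necessary I would replace $\phi$ by $-\phi$ to fix the sign convention.

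The main obstacle is making sure the certified statement in \cite{DahneGomezSerranoHou2021} really gives strict negativity on a full closed curve surrounding a positive point, and not merely that the nodal set avoids $\partial A$. If it only gives the latter, I would argue as follows. The nodal set $\mathcal Z(\phi)$ is a compact subset of $A$, and $\phi$ has exactly two nodal domains by Courant's theorem. Hence one nodal domain $N$ contains a collar of $\partial A$, and the other nodal domain $P$ is compactly contained in $A$. Orient $\phi$ so that $\phi<0$ on $N$. Pick $p\in P$. Choose a polygonal Jordan curve $\Gamma$ in the open set $N$, in the compact-closure region separating $\overline P$ from the outer boundary component, by approximating a level curve of the distance to the outer boundary. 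Then (2.4) holds and $Q$, the interior of $\Gamma$, is compactly contained in $A$.

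In the fallback, the delicate point is that $\Gamma$ must enclose $P$ while staying inside $A$, given that $A$ has holes. Since $A$ is the regular polygon minus holes, I would place $\Gamma$ close to the outer boundary. Then $\Gamma$ lies in the collar contained in $N$, and its interior $Q$ contains $P$. The interior $Q$ contains the holes as well, but only $Q\Subset A$ in the sense of $\Gamma\subset A$ is needed later. If the later arguments truly require $\overline Q\subset A$, I would instead take $\Gamma$ to be a curve surrounding $P$ inside the annular part of $N$ between $P$ and the holes, using the certified geometry of \cite{DahneGomezSerranoHou2021}.
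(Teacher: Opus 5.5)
Your main route is the one the paper takes. The paper also takes $A$ from D\"ahne--G\'omez-Serrano--Hou and gets simplicity of $a_2$ from their count. They show there are at most four eigenvalues below $67.23$, and they certify disjoint regions containing at least $1$, $1$ and $2$ of them. So the interval around $63.21$ contains exactly $a_2$, and $a_3$ lies in the cluster above it. The paper then reads (2.4) directly off their Section~6. There they certify $\max_\Gamma\phi\le-2.767\times10^{-6}$ on an explicit polygonal Jordan curve and $\phi(p)\ge 0.01337$ at $p=(1/10,0)$, using a global pointwise error bound on the approximate eigenfunction. So the ``main obstacle'' you flag is settled in the source and no fallback is needed. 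That matters, because the fallback does not work as written.

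First, one step in the fallback is invalid: you argue that because the nodal set avoids $\partial A$ and there are two nodal domains, one nodal domain contains a collar of $\partial A$ and the other is compactly contained in $A$. This fails for a domain with seven boundary components. Each zero-free collar does lie in a single nodal domain, but different collars may lie in different ones. For example, a nodal loop encircling a single hole is compatible with the nodal line not touching $\partial A$. In that configuration, every Jordan curve in $\{\phi<0\}$ surrounding a positive point must also enclose that hole, so no admissible $Q$ exists.

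Second, $Q\Subset A$ means $\overline Q\subset A$, not merely $\Gamma\subset A$, and this is used later. The passage from (6.3) to (6.2), the compactness of $\mathcal Z(U_\varepsilon)$, and the Euler count all need $\overline{P_\pm}$ to stay away from the holes. Section~7 also uses $K=\overline{Q_+}\cup\overline{Q_-}\Subset\Omega_0$. A curve hugging the outer boundary, with holes inside $Q$, would not deliver any of this.

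So the proposition genuinely rests on certified negativity along a curve that separates $p$ from \emph{all} boundary components, which is exactly what the cited Section~6 provides.
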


\begin{proof}
Take the regular hexagonal domain with six holes constructed by D\"ahne, G\'omez-Serrano, and Hou in~\cite{DahneGomezSerranoHou2021}. They first prove that there are at most four eigenvalues below $67.23$, with multiplicities included in the count. The three pairwise disjoint certified regions obtained by the method of particular solutions satisfy the following: the first spectral interval contains at least one eigenvalue;
\begin{equation}
I_2=[63.21-6.89\times10^{-3},\,63.21+6.89\times10^{-3}]
\tag{2.5}
\end{equation}
contains at least one eigenvalue; and the subsequent certified cluster contains at least two eigenvalues counted with multiplicity. All four certified eigenvalues lie below $67.23$. Hence the sum of the foregoing ``at least'' counts is already four, while there are at most four eigenvalues below the threshold. Thus $I_2$ contains exactly one eigenvalue counted with multiplicity. This eigenvalue is $a_2$, so $a_2$ is simple and $a_1<a_2<a_3$.

Section~6 of that paper constructs a polygonal Jordan curve $\Gamma$ in the domain and takes $p=(1/10,0)$. For the correspondingly normalized and oriented second eigenfunction, their certified estimates give
\begin{equation}
\max_{\Gamma}\phi\leq-2.767\times10^{-6},
\qquad
\phi(p)\geq0.013374568.
\tag{2.6}
\end{equation}
More specifically, the upper bound for the approximate eigenfunction on the curve is $-4.4929\times10^{-5}$, and the global pointwise error does not exceed $4.2162\times10^{-5}$; at $p$, the approximate value lies in $[0.01342\pm3.27\times10^{-6}]$. Combining the corresponding terms gives (2.6), and hence (2.4).
\end{proof}

Hereafter we fix $A,Q,\Gamma,p,\phi$ from Proposition~2.2 and write the strict margin as
\begin{equation}
\eta=\min\left\{\phi(p),-\max_{\Gamma}\phi\right\}>0.
\tag{2.7}
\end{equation}

\section{Symmetric thin-neck domains and the even--odd decomposition}

Choose a straight segment of the outer boundary of $A$ that is disjoint from $\overline Q$, and take a point $b$ in its relative interior. After a rigid motion, we may assume that this segment is vertical at the point of attachment. Fix $L>0$. Place a copy $A_+$ of $A$ to the right of the line $\{x_1=L\}$ so that the attachment window is
\begin{equation}
I_\varepsilon=\{L\}\times(-\varepsilon,\varepsilon),
\tag{3.1}
\end{equation}
and let $A_-$ be the reflection of $A_+$ across $\{x_1=0\}$. Take the rectangular thin neck
\begin{equation}
N_\varepsilon=(-L,L)\times(-\varepsilon,\varepsilon)
\tag{3.2}
\end{equation}
and glue its two end windows to the corresponding boundary windows of $A_-$ and $A_+$, respectively. More precisely, define
\begin{equation}
\Omega_\varepsilon
=\operatorname{int}\bigl(\overline{A_-}\cup\overline{N_\varepsilon}\cup\overline{A_+}\bigr).
\tag{3.3}
\end{equation}
When $\varepsilon$ is less than half the length of the selected straight segment, this is a bounded, connected, Lipschitz polygonal domain symmetric about $x_1=0$.

Let
\begin{equation}
H_\varepsilon=\Omega_\varepsilon\cap\{x_1>0\},
\qquad
S_\varepsilon=\{0\}\times(-\varepsilon,\varepsilon).
\tag{3.4}
\end{equation}
The half-domain consists of $A_+$ and the half-neck $T_\varepsilon=(0,L)\times(-\varepsilon,\varepsilon)$. We always impose the Dirichlet condition on the physical boundary $\partial H_\varepsilon\setminus S_\varepsilon$, and impose either the natural Neumann condition or the Dirichlet condition on $S_\varepsilon$. The corresponding form domains are
\begin{align}
V_\varepsilon^N
&=\{w\in H^1(H_\varepsilon):\operatorname{Tr}w=0
\text{ on }\partial H_\varepsilon\setminus S_\varepsilon\},
\tag{3.5}\\
V_\varepsilon^D
&=H_0^1(H_\varepsilon).
\tag{3.6}
\end{align}
Both forms are given by $q_\varepsilon[w]=\int_{H_\varepsilon}|\nabla w|^2$. Denote the two eigenvalue sequences, counted with multiplicity, by
\begin{equation}
0<\nu_1(\varepsilon)\leq\nu_2(\varepsilon)\leq\cdots,
\qquad
0<\delta_1(\varepsilon)\leq\delta_2(\varepsilon)\leq\cdots.
\tag{3.7}
\end{equation}

\begin{lemma}[Even--odd spectral decomposition]
The Dirichlet spectrum of the full domain $\Omega_\varepsilon$, counted with multiplicity, is the disjoint labeled union of the two spectra:
\begin{equation}
\operatorname{spec}(-\Delta_{\Omega_\varepsilon}^D)
=\{\nu_j(\varepsilon):j\geq1\}\mathbin{\dot\cup}
\{\delta_j(\varepsilon):j\geq1\}.
\tag{3.8}
\end{equation}
The eigenvalues $\nu_j$ correspond to eigenfunctions that are even with respect to $x_1=0$, and the eigenvalues $\delta_j$ correspond to odd eigenfunctions.
\end{lemma}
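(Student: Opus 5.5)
We use the reflection $\sigma(x_1,x_2)=(-x_1,x_2)$, which maps $\Omega_\varepsilon$ onto itself because the domain is symmetric about $\{x_1=0\}$. Composition with $\sigma$ gives a unitary involution $Pu=u\circ\sigma$ on $L^2(\Omega_\varepsilon)$. It preserves $H_0^1(\Omega_\varepsilon)$ and the Dirichlet form. Hence $P$ commutes with the Dirichlet Laplacian $-\Delta^D_{\Omega_\varepsilon}$ in the form sense, and
\[
L^2(\Omega_\varepsilon)=L^2_{\mathrm{even}}\oplus L^2_{\mathrm{odd}}
\]
is an orthogonal decomposition into reducing subspaces. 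The operator has compact resolvent, so its spectrum counted with multiplicity is the labeled union of the spectra of its restrictions to the even and odd parts. Every eigenspace splits as its even part plus its odd part, and the multiplicities add. It remains to identify the even restriction with the mixed problem on $V_\varepsilon^N$ and the odd restriction with the Dirichlet problem on $V_\varepsilon^D$.

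For this we use restriction to $H_\varepsilon$. Let $E_{\mathrm{even}}=\{u\in H_0^1(\Omega_\varepsilon):Pu=u\}$ and define $E_{\mathrm{odd}}$ analogously. We claim that $u\mapsto u|_{H_\varepsilon}$ is a bijection from $E_{\mathrm{even}}$ onto $V_\varepsilon^N$ and from $E_{\mathrm{odd}}$ onto $V_\varepsilon^D$, with
\[
q_{\Omega_\varepsilon}[u]=2q_\varepsilon[u|_{H_\varepsilon}],\qquad \|u\|^2_{L^2(\Omega_\varepsilon)}=2\|u|_{H_\varepsilon}\|^2_{L^2(H_\varepsilon)}.
\]
The restriction clearly lies in $H^1(H_\varepsilon)$ with zero trace on $\partial H_\varepsilon\setminus S_\varepsilon$. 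For odd $u$, the trace on $S_\varepsilon$ equals minus itself, hence vanishes. On a Lipschitz domain, a function with zero trace on the whole boundary lies in $H_0^1$, so $u|_{H_\varepsilon}\in H_0^1(H_\varepsilon)$.

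Conversely, take $w\in V_\varepsilon^N$ and extend it evenly by $w\circ\sigma$, or take $w\in V_\varepsilon^D$ and extend it oddly by $-w\circ\sigma$. The extension is $H^1$ across $S_\varepsilon$ because the two one-sided traces agree: in the even case they coincide by construction, and in the odd case both vanish. We verify this by integration by parts against test functions on the thin rectangle $N_\varepsilon$ near $S_\varepsilon$, where the geometry is a product. The extension also has zero trace on $\partial\Omega_\varepsilon$, so it lies in $H_0^1(\Omega_\varepsilon)$.

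With this identification, the Rayleigh quotients agree, so min--max (2.2) restricted to each symmetry class gives the result. Concretely, the $j$th eigenvalue of the even part equals $\nu_j(\varepsilon)$ and that of the odd part equals $\delta_j(\varepsilon)$. At the level of eigenfunctions, we test the weak equation on $\Omega_\varepsilon$ with even (respectively odd) test functions, using that $\int_{\Omega_\varepsilon}\nabla u\cdot\nabla v=0$ whenever $u$ and $v$ have opposite parity. This gives exactly the weak formulation of the mixed problem on $V_\varepsilon^N$ (respectively the Dirichlet problem on $V_\varepsilon^D$), with the natural Neumann condition on $S_\varepsilon$ arising from the freedom of the test functions there.

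The only mildly delicate step is the trace and gluing argument across $S_\varepsilon$ and the characterization of $H_0^1(H_\varepsilon)$ by vanishing trace. Both are standard for the Lipschitz polygonal $H_\varepsilon$, particularly since near $S_\varepsilon$ the domain is the rectangle $N_\varepsilon$. Disjointness of the union is understood in the labeled sense: a value occurring in both lists contributes to the multiplicity once from each list.
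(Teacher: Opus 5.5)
Your proposal is correct and follows essentially the same route as the paper. The reflection gives a unitary involution preserving $H_0^1(\Omega_\varepsilon)$ and the Dirichlet form, so the even and odd subspaces reduce the operator; restriction and even/odd extension then identify these subspaces with $V_\varepsilon^N$ and $V_\varepsilon^D$, scaling both norms by the same factor $2$, so the Rayleigh quotients and min--max values agree. You add a few details the paper leaves implicit (compact resolvent, the trace characterization of $H_0^1(H_\varepsilon)$, and the parity splitting of the weak equation), but the argument is the same.
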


\begin{figure}[htbp]
\centering
\begin{tikzpicture}[x=0.72cm,y=0.72cm,line join=round,line cap=round]
  \draw
    (-3.9,0.55)--(-3.9,1.40)--(-4.4,2.25)--(-7.1,2.10)
    --(-7.5,-0.10)--(-7,-1.60)--(-4.4,-1.60)--(-3.9,-0.55)
    --(3.9,-0.55)--(4.4,-1.60)--(7,-1.60)--(7.5,-0.10)
    --(7.1,2.10)--(4.4,2.25)--(3.9,1.40)--(3.9,0.55)--cycle;
  \draw[densely dashed] (0,0.55)--(0,3.0) node[above] {$x_1=0$};
  \draw[line width=1pt] (0,0.55)--(0,-0.55);
  \node[above right,fill=white,inner sep=1pt] at (0,0.55) {$S_\varepsilon$};
  \draw (-5.9,1.05) ellipse (0.75 and 0.45);
  \draw (5.9,1.05) ellipse (0.75 and 0.45);
  \fill (-5.9,1.05) circle (1.8pt);
  \fill (5.9,1.05) circle (1.8pt);
  \node[anchor=east] at (-6.02,1.05) {$p_-$};
  \node[anchor=west] at (6.02,1.05) {$p_+$};
  \node at (-5.9,1.72) {$Q_-$};
  \node at (5.9,1.72) {$Q_+$};
  \node at (-5.9,0.43) {$\Gamma_-$};
  \node at (5.9,0.43) {$\Gamma_+$};
  \foreach \x/\y in {-6.8/0.20,-5.0/0.20,-6.8/-0.75,-5.9/-0.62,-5.0/-0.75,6.8/0.20,5.0/0.20,6.8/-0.75,5.9/-0.62,5.0/-0.75}
    \draw (\x,\y) circle (0.11);
  \node at (-5.9,-1.20) {$A_-$};
  \node at (5.9,-1.20) {$A_+$};
  \node at (0,-0.88) {thin neck $N_\varepsilon$};
\end{tikzpicture}
\caption{Schematic of the symmetric thin-neck domain; the small circles only indicate the holes in the base domain, and the figure is not drawn to scale.}
\end{figure}

\begin{proof}
Define $\mathcal Rf(x_1,x_2)=f(-x_1,x_2)$. The reflection $\mathcal R$ is a self-adjoint unitary operator on $L^2(\Omega_\varepsilon)$, preserves $H_0^1(\Omega_\varepsilon)$, and satisfies
\[
q_{\Omega_\varepsilon}[\mathcal Rf,\mathcal Rg]
=q_{\Omega_\varepsilon}[f,g].
\]
Hence the Dirichlet Laplacian commutes with $\mathcal R$, and
\[
L^2(\Omega_\varepsilon)=L^2_{\mathrm{even}}\oplus L^2_{\mathrm{odd}}
\]
is an orthogonal decomposition into two reducing subspaces.

After restriction to $H_\varepsilon$, an even function belongs to $V_\varepsilon^N$; on the cross-section $S_\varepsilon$, its weak formulation gives the natural condition $\partial_{x_1}u=0$. Conversely, the even extension of a function in $V_\varepsilon^N$ belongs to $H_0^1(\Omega_\varepsilon)$. Under restriction and even extension, both the $L^2$ norm squared and the energy change by the same factor of $2$, so the Rayleigh quotient is unchanged. Similarly, an odd function has zero trace on $S_\varepsilon$, and its restriction belongs to $V_\varepsilon^D$; the odd extension of a function in $V_\varepsilon^D$ gives an odd function on the full domain. Thus the spectra of the two reduced operators are $\{\nu_j\}$ and $\{\delta_j\}$, respectively, and merging them with multiplicity gives (3.8).
\end{proof}

\section{Spectral convergence in the thin-neck limit}

Rather than reducing spectral convergence to a one-line invocation of a varying-domain result, we give the complete variational proof needed for the present construction. General theories of spectral convergence on varying domains and domains with thin tubes may be found in~\cite{Daners2003,CollinsTaylor2018,FelliOgnibene2020,RauchTaylor1975}.

\begin{lemma}[Neck mass estimate]
For every $w\in V_\varepsilon^N$ (and hence also for every $w\in V_\varepsilon^D$),
\begin{equation}
 \int_{T_\varepsilon}|w|^2\,dx
 \leq \frac{4\varepsilon^2}{\pi^2}
       \int_{T_\varepsilon}|\partial_{x_2}w|^2\,dx
 \leq \frac{4\varepsilon^2}{\pi^2}q_\varepsilon[w].
 \tag{4.1}
\end{equation}
\end{lemma}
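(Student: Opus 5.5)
The estimate (4.1) is a one-dimensional Poincaré inequality in the transverse variable $x_2$, applied slice by slice in the half-neck $T_\varepsilon=(0,L)\times(-\varepsilon,\varepsilon)$. The key point is that the long sides $(0,L)\times\{\pm\varepsilon\}$ of $T_\varepsilon$ belong to the physical boundary $\partial H_\varepsilon\setminus S_\varepsilon$. There every $w\in V_\varepsilon^N$ has zero trace, so the Neumann condition on $S_\varepsilon$ and the attachment window $I_\varepsilon$ never enter. Since $V_\varepsilon^D\subset V_\varepsilon^N$, the statement for $V_\varepsilon^D$ is automatic.

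\emph{Step 1 (smooth case).} I first argue for $w$ in a dense subclass. For example, take $w\in C^1(\overline{T_\varepsilon})$ vanishing on $(0,L)\times\{\pm\varepsilon\}$, or more simply restrictions to $T_\varepsilon$ of functions in $V_\varepsilon^N$. By Fubini, for almost every $x_1\in(0,L)$ the slice $g(t)=w(x_1,t)$ is absolutely continuous on $[-\varepsilon,\varepsilon]$ with $g(\pm\varepsilon)=0$. This uses the ACL characterization of $H^1$ together with the fact that the trace on the horizontal sides agrees with the slice limits. The sharp Dirichlet Poincaré (Wirtinger) inequality on an interval of length $2\varepsilon$ gives
\[
\int_{-\varepsilon}^{\varepsilon}|g|^2\,dt\le\Bigl(\frac{2\varepsilon}{\pi}\Bigr)^2\int_{-\varepsilon}^{\varepsilon}|g'|^2\,dt .
\]
This is because the first Dirichlet eigenvalue of $-d^2/dt^2$ on $(-\varepsilon,\varepsilon)$ is $\pi^2/(4\varepsilon^2)$, with eigenfunction $\cos(\pi t/2\varepsilon)$. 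It can be checked by expanding $g$ in the Dirichlet sine/cosine basis.

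\emph{Step 2 (integrate and extend).} Integrating the slice inequality over $x_1\in(0,L)$ gives the first inequality in (4.1). The second inequality is trivial: $|\partial_{x_2}w|^2\le|\nabla w|^2$ and $T_\varepsilon\subset H_\varepsilon$.

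The only point needing care, and the main obstacle, is justifying the zero boundary values on almost every vertical slice for a general Sobolev function. I would handle it by density. Functions in $V_\varepsilon^N$ can be approximated in $H^1(H_\varepsilon)$ by smooth functions on $\overline{H_\varepsilon}$ vanishing near $\partial H_\varepsilon\setminus S_\varepsilon$; this holds for the Lipschitz polygonal domain, with the mixed boundary meeting at corners. Both sides of (4.1) are continuous in the $H^1$ norm, so the inequality passes to the limit. Alternatively, I would avoid density entirely. I would extend $w$ by zero across the lines $x_2=\pm\varepsilon$ locally, which is legitimate because its trace vanishes there. The extension lies in $H^1$ of a neighborhood of $\overline{T_\varepsilon}\setminus(\{0\}\times[-\varepsilon,\varepsilon])$, and absolute continuity on almost every line then gives $g(\pm\varepsilon)=0$ directly.
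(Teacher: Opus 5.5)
Your proposal is correct and follows the paper's proof. Both use that the long sides $x_2=\pm\varepsilon$ of $T_\varepsilon$ lie in the physical boundary, apply the one-dimensional Dirichlet Poincar\'e inequality with constant $4\varepsilon^2/\pi^2$ on almost every vertical slice, and integrate in $x_1$. The paper simply asserts that almost every slice lies in $H_0^1(-\varepsilon,\varepsilon)$, so your density and zero-extension argument justifies that step in more detail rather than taking a different route.
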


\begin{proof}
The two long sides $x_2=\pm\varepsilon$ of the thin neck belong to the physical boundary. Hence, for almost every $x_1\in(0,L)$, the slice
\[
 x_2\longmapsto w(x_1,x_2)
\]
belongs to $H_0^1(-\varepsilon,\varepsilon)$. The first Dirichlet eigenvalue of the interval is $\pi^2/(4\varepsilon^2)$, and the one-dimensional Poincar\'e inequality gives
\[
 \int_{-\varepsilon}^{\varepsilon}|w(x_1,x_2)|^2\,dx_2
 \leq \frac{4\varepsilon^2}{\pi^2}
       \int_{-\varepsilon}^{\varepsilon}|\partial_{x_2}w(x_1,x_2)|^2\,dx_2.
\]
Integrating with respect to $x_1$ yields (4.1). Whether a Neumann or Dirichlet condition is imposed on the cross-section $S_\varepsilon$ has no effect on this transverse estimate.
\end{proof}

\begin{proposition}[Thin-neck limit of the two spectral sequences]
For each fixed $j\geq1$, as $\varepsilon\downarrow0$,
\begin{equation}
 \nu_j(\varepsilon)\longrightarrow a_j,
 \qquad
 \delta_j(\varepsilon)\longrightarrow a_j.
 \tag{4.2}
\end{equation}
\end{proposition}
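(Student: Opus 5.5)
The proof will establish an upper bound and a lower bound for each sequence, using only the min--max principle (2.2) and the neck mass estimate of Lemma~4.1.

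\textbf{Upper bound.} Extending functions of $H_0^1(A_+)$ by zero gives
\[
H_0^1(A_+)\subset V_\varepsilon^D\subset V_\varepsilon^N,
\]
and the energy and $L^2$ norm are unchanged. Inserting the span of the first $j$ eigenfunctions of $A_+$ into the min--max formula therefore yields
\[
\nu_j(\varepsilon)\le\delta_j(\varepsilon)\le a_j
\qquad\text{for all }\varepsilon .
\]
It then suffices to prove $\liminf_{\varepsilon\downarrow0}\nu_j(\varepsilon)\ge a_j$; since $\delta_j\ge\nu_j$, this also gives the lower bound for $\delta_j$.

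\textbf{Lower bound: compactness.} Suppose the liminf fails. Then along a sequence $\varepsilon_k\downarrow0$ we have $\nu_i(\varepsilon_k)\to\mu_i$ for $i\le j$ with $\mu_j<a_j$. Let $w_1^k,\dots,w_j^k$ be $L^2$-orthonormal mixed eigenfunctions on $H_{\varepsilon_k}$, each satisfying $q[w_i^k]\le a_j$. Lemma~4.1 gives
\[
\|w_i^k\|_{L^2(T_{\varepsilon_k})}^2\le \frac{4\varepsilon_k^2 a_j}{\pi^2}\to0 .
\]
The restrictions $v_i^k:=w_i^k|_{A_+}$ therefore satisfy $\langle v_i^k,v_l^k\rangle_{L^2(A_+)}\to\delta_{il}$, and they are bounded in $H^1(A_+)$ with $q_{A_+}[v_i^k]\le q_{\varepsilon_k}[w_i^k]$. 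These restrictions are not in $H_0^1(A_+)$, since the trace on the window $I_{\varepsilon_k}$ may be nonzero. Because $A_+$ is a bounded Lipschitz domain, Rellich compactness applies: passing to a subsequence, $v_i^k\rightharpoonup v_i$ weakly in $H^1(A_+)$ and strongly in $L^2(A_+)$. The limits $v_1,\dots,v_j$ are orthonormal in $L^2(A_+)$. The trace map $H^1(A_+)\to L^2(\partial A_+)$ is compact, and the traces of $v_i^k$ vanish on $\partial A_+\setminus I_{\varepsilon_k}$, a set whose complement has length $2\varepsilon_k\to0$. Hence $\operatorname{Tr}v_i=0$ a.e.\ on $\partial A_+$, and so $v_i\in H_0^1(A_+)$, because $A_+$ is Lipschitz.

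\textbf{Lower bound: conclusion.} For any combination $v=\sum c_iv_i$ with $\sum c_i^2=1$, weak lower semicontinuity and the eigen-equations for the $w_i^k$ give
\[
q_{A_+}[v]\le\liminf_k q_{A_+}\Bigl[\sum c_iv_i^k\Bigr]\le\liminf_k q_{\varepsilon_k}\Bigl[\sum c_iw_i^k\Bigr]=\liminf_k\sum c_i^2\nu_i(\varepsilon_k)\le\mu_j .
\]
Here the middle equality uses the $q$-orthogonality of the eigenfunctions. The min--max principle then gives $a_j\le\mu_j$, which contradicts $\mu_j<a_j$.

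\textbf{Main obstacle.} The delicate step is showing that the limit functions lie in $H_0^1(A_+)$, that is, that the Neumann-type leakage through the shrinking window $I_\varepsilon$ disappears. I handle it by combining the compactness of the trace operator with the fact that the window has measure $2\varepsilon$. If trace compactness on a polygonal boundary looks too quick, a fallback is to extend $v_i^k$ by zero across the straight boundary segment outside the window. These extensions are bounded in $H^1$ of a slightly larger domain, since the only jump in the extension sits on the window. The weak limit then vanishes outside $A_+$, and the conclusion follows by the same route.
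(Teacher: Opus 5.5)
Your proof is correct and is essentially the paper's argument: zero extension of the first $j$ eigenfunctions of $A$ for the upper bound, the transverse estimate of Lemma~4.1 to remove the neck mass, Rellich compactness on $A_+$, vanishing of the limiting trace because the window shrinks, and then weak lower semicontinuity plus min--max on $A$. Your other departures (getting the $\delta_j$ limit from $\nu_j\le\delta_j\le a_j$, arguing by contradiction, using trace compactness where the paper needs only weak continuity) are cosmetic. One small correction to your optional fallback, which the main argument does not need: a zero extension of $v_i^k$ that jumps across the window is not in $H^1$, so you would instead extend $w_i^k$ itself, keeping its neck values (which have zero trace on the long sides $x_2=\pm\varepsilon_k$), by zero into a strip of width less than $L$.
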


\begin{proof}
The proof is identical for the two conditions on the cross-section. In what follows, $e_j(\varepsilon)$ denotes either $\nu_j(\varepsilon)$ or $\delta_j(\varepsilon)$, and the corresponding form domain is denoted by $V_\varepsilon$.

We first prove the upper-limit estimate. Let $\phi_1,\ldots,\phi_j$ be the first $j$ $L^2(A)$-orthonormal Dirichlet eigenfunctions of $A$. Extend each $\phi_k$ by zero into $T_\varepsilon$. Since $\phi_k\in H_0^1(A)$, its trace on the connecting window $I_\varepsilon$ vanishes; the gluing lemma for Sobolev functions therefore shows that the zero extension belongs to $H^1(H_\varepsilon)$. It vanishes on the entire physical boundary and on $S_\varepsilon$, and hence belongs simultaneously to $V_\varepsilon^N$ and $V_\varepsilon^D$. This extension preserves both the $L^2$ inner products and the Dirichlet energies. Substituting $\operatorname{span}\{\phi_1,\ldots,\phi_j\}$ into the min--max formula gives
\begin{equation}
 e_j(\varepsilon)\leq a_j,
 \qquad
 \limsup_{\varepsilon\downarrow0}e_j(\varepsilon)\leq a_j.
 \tag{4.3}
\end{equation}

We next prove the lower-limit estimate. Let $\varepsilon_n$ be any sequence converging to zero, and pass to a subsequence along which $e_j(\varepsilon_n)$ converges to its lower limit. For $1\leq k\leq j$, choose the first $j$ $L^2(H_{\varepsilon_n})$-orthonormal eigenfunctions $w_{k,n}$. By (4.3),
\begin{equation}
 \int_{H_{\varepsilon_n}}|\nabla w_{k,n}|^2
 =e_k(\varepsilon_n)\leq a_j.
 \tag{4.4}
\end{equation}
Lemma~4.1 therefore gives
\begin{equation}
 \int_{T_{\varepsilon_n}}|w_{k,n}|^2
 \leq \frac{4a_j}{\pi^2}\varepsilon_n^2
 \longrightarrow0.
 \tag{4.5}
\end{equation}

Restrict $w_{k,n}$ to the fixed copy $A_+$ and identify the latter, by a rigid motion, with a function on $A$. By (4.4) and the normalization, these restrictions are bounded in $H^1(A)$. After a diagonal extraction, there exist $\psi_k\in H^1(A)$ such that
\begin{equation}
 w_{k,n}|_A\rightharpoonup\psi_k \quad\text{in }H^1(A),
 \qquad
 w_{k,n}|_A\longrightarrow\psi_k \quad\text{in }L^2(A).
 \tag{4.6}
\end{equation}
The second convergence follows from the Rellich compact embedding.

It remains to verify that $\psi_k$ has a full Dirichlet trace. Let $g_{k,n}$ be the trace of $w_{k,n}|_A$ on $\partial A$. Continuity of the trace operator implies that $g_{k,n}$ is bounded in $L^2(\partial A)$, and it vanishes on $\partial A\setminus I_{\varepsilon_n}$. For every fixed $h\in L^2(\partial A)$,
\begin{equation}
 \left|\int_{\partial A}g_{k,n}h\,ds\right|
 \leq C\,\|h\mathbf{1}_{I_{\varepsilon_n}}\|_{L^2(\partial A)}
 \longrightarrow0,
 \tag{4.7}
\end{equation}
because the length of the window tends to zero and the $L^2$ integral is absolutely continuous. On the other hand, (4.6) and weak continuity of the trace operator show that $g_{k,n}$ converges weakly to $\operatorname{Tr}\psi_k$. Thus $\operatorname{Tr}\psi_k=0$, that is,
\begin{equation}
 \psi_k\in H_0^1(A).
 \tag{4.8}
\end{equation}
Observe that this argument uses only the Lipschitz trace theorem and the fact that the window length tends to zero; the fixed holes in the base domain create no additional difficulty.

It follows from (4.5), orthonormality on the full domain, and (4.6) that
\begin{equation}
 \int_A\psi_k\psi_\ell\,dx=\delta_{k\ell}.
 \tag{4.9}
\end{equation}
In particular, $E=\operatorname{span}\{\psi_1,\ldots,\psi_j\}$ has dimension $j$. Now take any $c=(c_1,\ldots,c_j)\in\mathbb{R}^j$ and set
\[
 W_n=\sum_{k=1}^j c_k w_{k,n},
 \qquad
 \Psi=\sum_{k=1}^j c_k\psi_k.
\]
By weak lower semicontinuity, orthogonality of the eigenfunctions, and $e_k(\varepsilon_n)\leq e_j(\varepsilon_n)$,
\begin{align}
 \int_A|\nabla\Psi|^2
 &\leq\liminf_{n\to\infty}\int_{H_{\varepsilon_n}}|\nabla W_n|^2 \notag\\
 &=\liminf_{n\to\infty}\sum_{k=1}^j c_k^2e_k(\varepsilon_n) \notag\\
 &\leq\left(\liminf_{n\to\infty}e_j(\varepsilon_n)\right)
       \sum_{k=1}^j c_k^2.
 \tag{4.10}
\end{align}
Equation (4.9) gives $\|\Psi\|_{L^2(A)}^2=\sum_k c_k^2$. Taking the maximum over the $j$-dimensional space $E$ and then applying the min--max formula on $A$, we obtain
\begin{equation}
 a_j\leq\max_{0\neq\Psi\in E}R_A[\Psi]
 \leq\liminf_{n\to\infty}e_j(\varepsilon_n).
 \tag{4.11}
\end{equation}
Since the original sequence was arbitrary, (4.3) and (4.11) together prove (4.2).
\end{proof}

\begin{proposition}[Interior convergence of the simple branch]
There exists $\varepsilon_1>0$ such that $\nu_2(\varepsilon)$ is simple whenever $0<\varepsilon<\varepsilon_1$. Let $v_\varepsilon$ be a real eigenfunction corresponding to this eigenvalue, normalized by $\|v_\varepsilon\|_{L^2(H_\varepsilon)}=1$. Fix its sign by requiring
\begin{equation}
 \int_A v_\varepsilon\phi\,dx>0.
 \tag{4.12}
\end{equation}
Then
\begin{equation}
 v_\varepsilon|_A\longrightarrow\phi
 \quad\text{in }L^2(A),
 \tag{4.13}
\end{equation}
and, for every $K\Subset A$,
\begin{equation}
 v_\varepsilon\longrightarrow\phi
 \quad\text{in }C^1(K).
 \tag{4.14}
\end{equation}
\end{proposition}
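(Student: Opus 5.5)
Simplicity will come from Proposition~4.2 together with the gap $a_1<a_2<a_3$ of Proposition~2.2. Set $g=\tfrac13\min\{a_2-a_1,a_3-a_2\}>0$ and choose $\varepsilon_1$ so that $|\nu_j(\varepsilon)-a_j|<g$ for $j=1,2,3$ whenever $0<\varepsilon<\varepsilon_1$. Then $\nu_1<\nu_2<\nu_3$, so $\nu_2(\varepsilon)$ is simple and $v_\varepsilon$ is determined up to sign. The sign condition (4.12) is only meaningful once we know $\int_A v_\varepsilon\phi\neq0$ for small $\varepsilon$; this will come out of the compactness argument below, by contradiction along sequences.

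For (4.13) I will rerun the lower-limit argument of Proposition~4.2 with $j=2$. Let $\varepsilon_n\to0$ be arbitrary and take the first two orthonormal eigenfunctions $w_{1,n}$ and $w_{2,n}=\pm v_{\varepsilon_n}$. Exactly as in (4.4)--(4.9), a subsequence gives $w_{k,n}|_A\to\psi_k$ strongly in $L^2(A)$ and weakly in $H^1(A)$, with $\psi_k\in H_0^1(A)$ orthonormal. By (4.10)--(4.11) and $e_k(\varepsilon_n)\to a_k$, we get $\int_A|\nabla\Psi|^2\le a_2\|\Psi\|^2$ on $E=\operatorname{span}\{\psi_1,\psi_2\}$, and with $k=1$ alone, $\int|\nabla\psi_1|^2\le a_1$.

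Characterization of the limits: $\psi_1$ attains $a_1$, so it is the first eigenfunction. Expanding $\psi_2\perp\psi_1$ in the eigenbasis of $A$ and using $R_A[\psi_2]\le a_2$ with simplicity of $a_2$ forces $\psi_2=\pm\phi$, where $\phi$ is normalized in $L^2(A)$ as in Proposition~4.2. A cleaner alternative is to pass to the limit in the weak equation $\int\nabla w_{2,n}\cdot\nabla\zeta=\nu_2\int w_{2,n}\zeta$ for $\zeta\in C_c^\infty(A)$, which gives $-\Delta\psi_2=a_2\psi_2$ directly. Hence $v_{\varepsilon_n}|_A\to\pm\phi$ in $L^2$. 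In particular $\int_A v_{\varepsilon}\phi\to\pm1$ along the subsequence, so it is nonzero for small $\varepsilon$ (otherwise a contradicting sequence would yield a limit with $\int\psi_2\phi=0$). The normalization (4.12) then selects $+\phi$. Since every sequence has a subsequence converging to the same limit $\phi$, the full family converges, proving (4.13).

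For (4.14), interior elliptic regularity does the work. On $A$, $v_\varepsilon$ solves $-\Delta v_\varepsilon=\nu_2(\varepsilon)v_\varepsilon$ with $\nu_2$ bounded and $\|v_\varepsilon\|_{L^2(A)}\le1$. Fix $K\Subset K'\Subset A$. Interior $H^m$ estimates (iterating Caccioppoli, or using the Bessel-type mean value representation) bound $\|v_\varepsilon\|_{H^m(K')}$ for every $m$, hence $\|v_\varepsilon\|_{C^2(K)}$ by Sobolev embedding. The same estimates apply to the difference $v_\varepsilon-\phi$, which satisfies $-\Delta(v_\varepsilon-\phi)=\nu_2(v_\varepsilon-\phi)+(\nu_2-a_2)\phi$; its $C^1(K)$ norm is therefore controlled by $\|v_\varepsilon-\phi\|_{L^2(K')}+|\nu_2-a_2|$, which tends to $0$.

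The main obstacle I expect is the identification step $\psi_2=\pm\phi$ together with making the sign choice well-posed. It is important that the convergence of the whole family, not just of subsequences, relies on the simplicity of $a_2$ and on the normalization (4.12).
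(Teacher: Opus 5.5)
Your proposal is correct and follows essentially the same route as the paper. You obtain simplicity of $\nu_2(\varepsilon)$ from Proposition~4.2 and the gaps $a_1<a_2<a_3$, then rerun the compactness argument of Proposition~4.2 and pass to the limit in the weak eigenvalue equation to get $\psi_2=\pm\phi$; this is your ``cleaner alternative'', which is exactly the paper's argument, and your variational identification via $\psi_1$ and the Rayleigh bound is an equally valid substitute. You use the subsequence argument both to make the sign choice (4.12) well posed and to get convergence of the whole family, and you bootstrap interior estimates for $v_\varepsilon-\phi$ (right-hand side $\nu_2(v_\varepsilon-\phi)+(\nu_2-a_2)\phi$) to reach $C^1(K)$ convergence, as the paper does; your remark that $\phi$ must be $L^2(A)$-normalized for (4.13) to hold is a point the paper uses only tacitly.
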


\begin{proof}
Proposition~4.2 with $j=1,2,3$ gives $\nu_j(\varepsilon)\to a_j$. Since $a_1<a_2<a_3$, after decreasing $\varepsilon_1$ we have
\[
 \nu_1(\varepsilon)<\nu_2(\varepsilon)<\nu_3(\varepsilon),
\]
and hence $\nu_2(\varepsilon)$ is simple. Temporarily leave the sign unspecified, and choose any normalized eigenfunction $\widetilde v_\varepsilon$. By the compactness argument used in the preceding proof, every sequence converging to zero has a subsequence along which $\widetilde v_\varepsilon|_A$ converges strongly in $L^2(A)$ to some $\psi\in H_0^1(A)$; moreover, the vanishing of the neck mass gives $\|\psi\|_2=1$. For any $\zeta\in C_c^\infty(A)$, when $\varepsilon$ is sufficiently small, $\zeta$ may be used as a test function in the half-domain eigenvalue equation, so that
\[
 \int_A\nabla\widetilde v_\varepsilon\cdot\nabla\zeta
 =\nu_2(\varepsilon)\int_A\widetilde v_\varepsilon\zeta.
\]
Passing to the limit gives $-\Delta\psi=a_2\psi$. Since $a_2$ is simple, $\psi=\phi$ or $\psi=-\phi$. Therefore,
\[
 \left|\int_A\widetilde v_\varepsilon\phi\,dx\right|\longrightarrow1.
\]
After decreasing $\varepsilon_1$ once more, this inner product is nonzero, and the sign can therefore be fixed uniquely by (4.12). For the thus oriented $v_\varepsilon$, every subsequential limit above must equal $\phi$; consequently the whole family satisfies (4.13).

Choose nested compact sets $K\Subset K_1\Subset K_2\Subset A$, and let $z_\varepsilon=v_\varepsilon-\phi$. On $K_2$, $z_\varepsilon$ satisfies
\[
 -\Delta z_\varepsilon
 =\nu_2(\varepsilon)z_\varepsilon
  +\bigl(\nu_2(\varepsilon)-a_2\bigr)\phi.
\]
Denote the right-hand side by $f_\varepsilon$. By (4.13) and convergence of the eigenvalues, $f_\varepsilon\to0$ in $L^2(K_2)$. The second-order interior estimate yields
\[
 \|z_\varepsilon\|_{H^2(K_1)}
 \leq C\bigl(\|f_\varepsilon\|_{L^2(K_2)}
             +\|z_\varepsilon\|_{L^2(K_2)}\bigr)
 \longrightarrow0.
\]
It follows that $f_\varepsilon\to0$ in $H^1(K_1)$. Applying a higher-order interior estimate gives
\[
 \|z_\varepsilon\|_{H^3(K)}
 \leq C\bigl(\|f_\varepsilon\|_{H^1(K_1)}
             +\|z_\varepsilon\|_{L^2(K_1)}\bigr)
 \longrightarrow0.
\]
The two-dimensional embedding $H^3(K)\hookrightarrow C^1(K)$ now gives (4.14).
\end{proof}

\section{Strict separation of the mixed spectra and identification of the third eigenvalue}

The inclusion $V_\varepsilon^D\subset V_\varepsilon^N$ yields only a non-strict inequality. The following argument uses unique continuation across the boundary to rule out equality.

\begin{theorem}[Strict separation at equal indices]
For every $\varepsilon>0$ and every $j\geq1$,
\begin{equation}
 \nu_j(\varepsilon)<\delta_j(\varepsilon).
 \tag{5.1}
\end{equation}
\end{theorem}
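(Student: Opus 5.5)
The plan is to argue by contradiction. We combine the min--max characterization on $V_\varepsilon^N$ with the inclusion $V_\varepsilon^D\subset V_\varepsilon^N$ to produce a single nonzero function that is simultaneously a Dirichlet eigenfunction and a mixed (Neumann on $S_\varepsilon$) eigenfunction on $H_\varepsilon$. We then kill it by unique continuation across $S_\varepsilon$.

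\textbf{Step 1: set-up.} Fix $\varepsilon>0$, $j\geq1$, and suppose $\nu_j(\varepsilon)=\delta_j(\varepsilon)=:\lambda$; the inclusion already gives $\nu_j\leq\delta_j$. Let $F\subset V_\varepsilon^D$ be the span of all Dirichlet eigenfunctions on $H_\varepsilon$ with eigenvalue $\leq\lambda$. Then $\dim F\geq j$, and every $w\in F$ satisfies $q_\varepsilon[w]\leq\lambda\|w\|^2$. Equality holds in this bound only if $w$ lies in the Dirichlet eigenspace of $\lambda$, which we see by expanding $w$ in the orthonormal eigenbasis.

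\textbf{Step 2: a common eigenfunction.} Let $M\subset V_\varepsilon^N$ be the span of the mixed eigenfunctions with eigenvalue $<\lambda$. Since $\nu_j=\lambda$, we have $\dim M\leq j-1<\dim F$. Hence there is $0\neq w\in F$ that is $L^2$-orthogonal to $M$. By the spectral theorem for the mixed problem, $q_\varepsilon[w]\geq\lambda\|w\|^2$; combined with Step 1 this gives equality. Equality in Step 1 puts $w$ in the Dirichlet eigenspace of $\lambda$. Equality in the lower bound on $M^\perp$, where $\lambda$ is the bottom of the spectrum, shows that $w$ is a mixed eigenfunction with eigenvalue $\lambda$. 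So $w\in H_0^1(H_\varepsilon)$, and
\[
\int_{H_\varepsilon}\nabla w\cdot\nabla\zeta=\lambda\int_{H_\varepsilon}w\zeta\qquad\text{for all }\zeta\in V_\varepsilon^N.
\]

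\textbf{Step 3: unique continuation (the main point).} Let $\widetilde H=H_\varepsilon\cup S_\varepsilon\cup\bigl((-\varepsilon,0]\times(-\varepsilon,\varepsilon)\bigr)$, which is connected and open, and extend $w$ by zero to the left piece. Because the trace of $w$ on $S_\varepsilon$ vanishes, the extension $\widetilde w$ lies in $H^1(\widetilde H)$.

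Now take any $\zeta\in C_c^\infty(\widetilde H)$. Its restriction to $H_\varepsilon$ vanishes near $\partial H_\varepsilon\setminus S_\varepsilon$, so it belongs to $V_\varepsilon^N$. Using the mixed weak formulation from Step 2 and $\widetilde w=0$ on the left piece, we get $\int_{\widetilde H}\nabla\widetilde w\cdot\nabla\zeta=\lambda\int_{\widetilde H}\widetilde w\zeta$. Thus $\widetilde w$ is a weak, hence real-analytic, solution of $-\Delta\widetilde w=\lambda\widetilde w$ in $\widetilde H$ that vanishes on an open set. Unique continuation (as in Lemma~2.1) then gives $\widetilde w\equiv0$, so $w\equiv0$, a contradiction.

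I expect Step 3 to be the delicate step: one must see that the vanishing Cauchy data on $S_\varepsilon$ make the zero extension a genuine weak solution across $S_\varepsilon$. The observation that makes this work is that the Neumann information is encoded entirely in the weak form on $V_\varepsilon^N$, so no boundary regularity of $w$ at the corners of $S_\varepsilon$ is needed.
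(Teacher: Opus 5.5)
Your proposal is correct and follows the paper's proof essentially step for step: the same dimension count produces a nonzero $w\in H_0^1(H_\varepsilon)$ that is also a mixed eigenfunction, and the same zero extension across $S_\varepsilon$ followed by unique continuation gives the contradiction. The only difference is in how the extension is shown to be a weak solution. You test the weak mixed formulation directly against $C_c^\infty(\widetilde H)$. The paper instead passes through classical Cauchy data on a subsegment $S'$ via boundary elliptic regularity. Your route justifies the same step slightly more cleanly, since it needs no regularity of $w$ up to $S_\varepsilon$.
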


\begin{proof}
The inclusion $V_\varepsilon^D\subset V_\varepsilon^N$ and the min--max principle first give $\nu_j\leq\delta_j$. Suppose, to the contrary, that for some $j$,
\begin{equation}
 \nu_j=\delta_j=\alpha.
 \tag{5.2}
\end{equation}
Let $E_D$ be the space spanned by the first $j$ orthonormal eigenfunctions of the full Dirichlet problem on the half-domain. Then $\dim E_D=j$, and
\begin{equation}
 q_\varepsilon[f]\leq\alpha\|f\|_2^2
 \qquad(f\in E_D).
 \tag{5.3}
\end{equation}
Let $F_N^{<\alpha}$ be the spectral subspace spanned by all eigenfunctions of the mixed Neumann problem whose eigenvalues are strictly below $\alpha$. Since $\nu_j=\alpha$, we have $\dim F_N^{<\alpha}\leq j-1$. Hence one may choose
\begin{equation}
 0\neq f\in E_D\cap(F_N^{<\alpha})^\perp.
 \tag{5.4}
\end{equation}
Expand $f$ in the orthonormal eigenbasis of the mixed Neumann problem. The orthogonality condition shows that only terms with eigenvalues at least $\alpha$ occur in this expansion. Therefore,
\begin{equation}
 q_\varepsilon[f]\geq\alpha\|f\|_2^2.
 \tag{5.5}
\end{equation}
Together with (5.3), this shows that equality holds on both sides. Equality in the Neumann spectral expansion means that $f$ belongs to its $\alpha$-eigenspace; equality in the Dirichlet spectral expansion on $E_D$ means that $f$ also belongs to the $\alpha$-eigenspace of the full Dirichlet problem.

Choose any open subsegment $S'$ of the cross-section $S_\varepsilon$ whose closure stays away from both endpoints. The full Dirichlet condition gives $f|_{S'}=0$, while the natural boundary condition for the mixed problem gives $\partial_n f|_{S'}=0$. The boundary is a straight line near the cross-section, and elliptic regularity gives these two Cauchy data their classical meaning. Extend $f$ by zero across $S'$. Since both the function value and the normal flux vanish, integration by parts produces no distributional interface term, so the extended function still satisfies $(-\Delta-\alpha)f=0$ in the distributional sense. It vanishes identically on an open set on the other side of the interface; interior unique continuation forces it to vanish identically in the connected half-domain, contradicting (5.4). Thus equality is impossible, and (5.1) follows.
\end{proof}

\begin{proposition}[Location and simplicity of the third eigenvalue]
There exists $\varepsilon_0>0$ such that, whenever $0<\varepsilon<\varepsilon_0$,
\begin{equation}
 \nu_1(\varepsilon)<\delta_1(\varepsilon)<\nu_2(\varepsilon)
 <\min\{\delta_2(\varepsilon),\nu_3(\varepsilon)\}.
 \tag{5.6}
\end{equation}
Consequently,
\begin{equation}
 \lambda_1(\Omega_\varepsilon)=\nu_1(\varepsilon),
 \qquad
 \lambda_2(\Omega_\varepsilon)=\delta_1(\varepsilon),
 \qquad
 \lambda_3(\Omega_\varepsilon)=\nu_2(\varepsilon),
 \tag{5.7}
\end{equation}
and $\lambda_3(\Omega_\varepsilon)$ is simple.
\end{proposition}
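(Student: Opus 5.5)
The plan is to combine the limits of Proposition~4.2 with the strict inequality of Theorem~5.1, and then to read off the ordering of the merged spectrum from Lemma~3.1.

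\textbf{Choosing $\varepsilon_0$.} Proposition~4.2 with $j=1,2,3$ gives $\nu_j(\varepsilon)\to a_j$ and $\delta_j(\varepsilon)\to a_j$. Since $a_1<a_2<a_3$ by Proposition~2.2, set
\[
\gamma=\tfrac13\min\{a_2-a_1,\,a_3-a_2\}>0 .
\]
Choose $\varepsilon_0$ so small that each of $\nu_1,\delta_1,\nu_2,\delta_2,\nu_3$ lies within $\gamma$ of its limit whenever $0<\varepsilon<\varepsilon_0$.

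\textbf{The chain (5.6).} With this choice, $\nu_1,\delta_1<a_1+\gamma<a_2-\gamma<\nu_2,\delta_2$. Similarly, $\nu_2<a_2+\gamma<a_3-\gamma<\nu_3$. This gives $\delta_1<\nu_2$ and $\nu_2<\nu_3$. The two remaining strict inequalities come from Theorem~5.1: $\nu_1<\delta_1$ and $\nu_2<\delta_2$. Together these yield (5.6).

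\textbf{Ordering the full spectrum.} By Lemma~3.1, the full Dirichlet spectrum of $\Omega_\varepsilon$ is the merged multiset $\{\nu_j\}\mathbin{\dot\cup}\{\delta_j\}$. Every other element is no smaller than $\min\{\delta_2,\nu_3\}$, since $\nu_j\ge\nu_3$ for $j\ge3$ and $\delta_j\ge\delta_2$ for $j\ge2$. So the three smallest elements of the merged list are $\nu_1<\delta_1<\nu_2$, in that order. All remaining elements are strictly larger than $\nu_2$, by the last inequality in (5.6).

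Hence (5.7) holds, and $\lambda_4(\Omega_\varepsilon)\ge\min\{\delta_2,\nu_3\}>\lambda_3(\Omega_\varepsilon)$. The multiplicity of $\lambda_3$ in the full spectrum is the number of merged indices equal to $\nu_2$. That number is one, because $\nu_1<\nu_2<\nu_3$, $\delta_1<\nu_2<\delta_2$, and all later terms are even larger. Therefore $\lambda_3(\Omega_\varepsilon)$ is simple.

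\textbf{Main obstacle.} The only genuinely delicate point is the strict inequality $\nu_2<\delta_2$. Both quantities tend to $a_2$, so the convergence of Proposition~4.2 alone cannot separate them. This is exactly what Theorem~5.1 supplies, and the plan simply invokes it.
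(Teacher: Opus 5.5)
Your proposal is correct and takes essentially the same route as the paper. Proposition~4.2 separates the eigenvalues clustering near $a_1,a_2,a_3$ (your $\gamma$ plays the role of the paper's $\rho$), Theorem~5.1 supplies $\nu_1<\delta_1$ and $\nu_2<\delta_2$, and Lemma~3.1 gives the ordering of the merged spectrum and the simplicity of $\lambda_3(\Omega_\varepsilon)$.
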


\begin{proof}
By (2.3), one may choose $\rho>0$ such that
\begin{equation}
 2\rho<\min\{a_2-a_1,a_3-a_2\}.
 \tag{5.8}
\end{equation}
Proposition~4.2 shows that, when $\varepsilon$ is sufficiently small, both $\nu_k$ and $\delta_k$ lie within distance $\rho$ of $a_k$ for $k=1,2,3$. Hence $\delta_1<\nu_2$ and $\nu_2<\nu_3$. Theorem~5.1 also gives $\nu_1<\delta_1$ and $\nu_2<\delta_2$; together these inequalities yield (5.6).

By Lemma~3.1, the spectrum on the full domain is the union, with multiplicities, of the two spectral sequences. Equation (5.6) shows that the first three terms of the merged sequence are, in order, $\nu_1,\delta_1,\nu_2$, which proves (5.7). Moreover, $\nu_1<\nu_2<\nu_3$ excludes multiplicity of $\nu_2$ within the even subspace, while $\delta_1<\nu_2<\delta_2$ excludes equality with any odd eigenvalue. Thus $\lambda_3=\nu_2$ is simple on the full space, and
\[
 \lambda_2<\lambda_3<\lambda_4.
\]
\end{proof}

\section{Nodal-Set Control and a Polygonal Counterexample}

Fix a sufficiently small \(\varepsilon>0\) satisfying Proposition~5.2. Let
\(v_\varepsilon\) be the \(\nu_2\)-eigenfunction oriented as in (4.12), and
let \(U_\varepsilon\) be its even extension across \(S_\varepsilon\). By
Lemma~3.1 and Proposition~5.2, \(U_\varepsilon\) is an eigenfunction
corresponding to the simple third eigenvalue.

Denote by \(Q_\pm,\Gamma_\pm,p_\pm\) the images of \(Q,\Gamma,p\),
respectively, in the two reflected copies. These compact sets remain at a
positive distance from the connecting windows. Proposition~4.3 and the
margin in (2.7) show that, after decreasing \(\varepsilon\) further,
\begin{equation}
  U_\varepsilon(p_\pm)>\frac{\eta}{2},
  \qquad
  \max_{x\in\Gamma_\pm}U_\varepsilon(x)<-\frac{\eta}{2}.
  \tag{6.1}
\end{equation}

\begin{theorem}[Polygonal counterexample]\label{thm:polygonal-counterexample}
For a fixed sufficiently small \(\varepsilon>0\), the domain
\(\Omega_\varepsilon\) satisfies \(\lambda_2<\lambda_3<\lambda_4\), and the
interior nodal set of its third eigenfunction \(U_\varepsilon\) satisfies
\begin{equation}
  \mathcal Z(U_\varepsilon)\Subset Q_+\cup Q_-\Subset\Omega_\varepsilon.
  \tag{6.2}
\end{equation}
More precisely, \(\mathcal Z(U_\varepsilon)\) consists of exactly two
disjoint real-analytic simple closed curves, one in each of \(Q_+\) and
\(Q_-\).
\end{theorem}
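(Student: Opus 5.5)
The spectral part of the statement is already contained in Proposition~5.2: for $\varepsilon<\varepsilon_0$ we have $\lambda_2<\lambda_3<\lambda_4$, and $U_\varepsilon$ spans the $\lambda_3$-eigenspace. It therefore remains to control the nodal set, and we use only (6.1), Courant's theorem, and Lemma~2.1. Let $D_\pm$ be the connected component of $\{U_\varepsilon>0\}$ containing $p_\pm$. Because $U_\varepsilon<0$ on the Jordan curve $\Gamma_\pm$ and $p_\pm\in Q_\pm$, the set $D_\pm$ cannot cross $\Gamma_\pm$, so $D_\pm\subset Q_\pm$. In particular $D_+\neq D_-$, since $\overline{Q_+}\cap\overline{Q_-}=\emptyset$. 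Next let $G$ be the connected component of $\{U_\varepsilon<0\}$ containing $\Gamma_+$. We claim $\Gamma_-\subset G$ as well. The set $\Omega_\varepsilon\setminus(\overline{Q_+}\cup\overline{Q_-})$ is connected, because the $Q_\pm$ are Jordan domains compactly inside $A_\pm$. Suppose this exterior region $X$ contained a point where $U_\varepsilon\ge0$. If $U_\varepsilon>0$ there, its positive nodal domain would be a third positive domain, distinct from $D_\pm$ because it meets $X$ while $D_\pm\subset Q_\pm$. If $U_\varepsilon=0$ there, Lemma~2.1 produces a nearby positive point in $X$, with the same conclusion. Either way $U_\varepsilon$ would have at least two positive domains besides $D_\pm$ being excluded only if we also count a negative domain, which exists near $\Gamma_+$. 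That gives at least four nodal domains, contradicting Courant's bound of $3$ for $\lambda_3$. Hence $U_\varepsilon<0$ on all of $X$. Then $X\cup\Gamma_+\cup\Gamma_-$ is connected and negative, and lies in a single negative nodal domain $G$. So $U_\varepsilon$ has exactly three nodal domains, $D_+$, $D_-$ and $G$, and $\mathcal Z(U_\varepsilon)\subset (Q_+\setminus D_+)\cup(Q_-\setminus D_-)$. This set is compact in $Q_+\cup Q_-$, since $U_\varepsilon<0$ near $\Gamma_\pm$; this gives (6.2).

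For the topology, I would work inside one copy, say $Q_+$. Let $Z_+=\mathcal Z(U_\varepsilon)\cap Q_+$. Then $Q_+\setminus Z_+$ has exactly two components: $D_+$, and the part of $G$ inside $Q_+$, which is connected because it contains a collar of $\Gamma_+$. Indeed, every point of $Q_+$ where $U_\varepsilon<0$ lies in $G$, since $G$ is the only negative nodal domain, and $G\cap Q_+$ connects to the collar through $G$. By the local structure result of Cheng~\cite{Cheng1976}, $Z_+$ is a finite graph whose vertices are the singular zeros, each of even degree $2m\ge4$ with alternating signs. I would rule out singular points by the following argument. Suppose $x_0\in Z_+$ is singular. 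The $2m\ge4$ sectors at $x_0$ alternate in sign, so there are at least two local positive sectors, and all of them lie in the single domain $D_+$. Now pick a short loop in $D_+$ joining two of these positive sectors and then returning through $x_0$. This loop is a Jordan curve that separates the negative sectors lying between them. Those negative sectors then lie in different components of $Q_+\setminus(\text{loop})$. But they must be joined inside $G\cap Q_+$, which avoids $D_+\cup\{x_0\}$, and this is a contradiction. Making this Jordan-curve separation argument rigorous is the step I expect to be the main obstacle. A cleaner alternative is an Euler-characteristic count on the planar graph $Z_+\cup\Gamma_+$ with two faces inside $Q_+$. I would try the Jordan-curve argument first.

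Once every zero is regular, $Z_+$ is a compact one-dimensional real-analytic submanifold of $Q_+$, that is, a finite disjoint union of analytic simple closed curves. Each such curve separates the plane. If there were $k\ge2$ curves, the complement would have $k+1\ge3$ components within $Q_+$, contradicting the count of two. So $Z_+$ is a single real-analytic simple closed curve, and the same holds in $Q_-$, which proves the theorem.
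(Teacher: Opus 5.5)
Your proposal is correct. Its first half follows the paper; its treatment of the topology of the nodal set takes a genuinely different route.

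\textbf{First half.} The Courant count forces exactly three nodal domains $D_+$, $D_-$, $G$, and this gives (6.2). This is essentially the paper's argument. The paper phrases it as $\{U_\varepsilon>0\}=P_+\cup P_-$ and then uses Lemma~2.1 to place every zero in $\overline{P_+}\cup\overline{P_-}$. You instead show directly that $U_\varepsilon<0$ on the connected set $X$. Your sentence ``at least two positive domains besides $D_\pm$ being excluded only if\dots'' is garbled. The intended count is $D_+$, $D_-$, the new positive domain, and the negative domain near $\Gamma_+$: four in total.

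\textbf{Topological half: the paper's route.} The paper treats $Z$ globally as a finite planar graph. It computes $F=3$ components of $\mathbb R^2\setminus Z$ and notes $c\ge 2$ and $E\ge V$. A single application of Euler's formula $V-E+F=1+c$ then yields both $c=2$ and that every vertex has degree $2$.

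\textbf{Topological half: your route.} You localize to $Q_+$ and use the decomposition $Q_+\setminus Z_+=D_+\sqcup(G\cap Q_+)$. You exclude a singular zero by a Jordan-curve separation argument. You then conclude via the classification of compact $1$-manifolds and the count of complementary components of $k$ disjoint Jordan curves.

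Your route needs only Cheng's local picture at one singular point. It does not need the global finite-graph structure or Euler's formula with artificial vertices. The step you flag as the main obstacle is standard:
\begin{itemize}
\item Take arcs from $x_0$ into two positive sectors that are not adjacent.
\item Join their far endpoints by a simple arc in $D_+$. This arc automatically avoids $x_0$, and after trimming, a subarc of it meets the two arcs only at its own endpoints.
\item The resulting Jordan curve $\sigma\subset D_+\cup\{x_0\}$ coincides near $x_0$ with the two arcs. The two local wedges at $x_0$ therefore lie in different components of $\mathbb R^2\setminus\sigma$, because $x_0$ is on the boundary of both components.
\item $G$ is connected and disjoint from $\sigma$, yet contains negative sectors in both wedges. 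That is the contradiction.
\end{itemize}
You do not even need the connectedness of $G\cap Q_+$ for this step. The loop also need not be ``short''; any simple arc in $D_+$ works.

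\textbf{Comparison.} The paper's Euler count is shorter and settles both the absence of singular zeros and the number of components in one line. Its cost is the global computation $F=3$, which requires the connectivity of the outer complementary piece through the holes and the exterior. Your approach trades that for two elementary planar-topology facts and a per-copy argument.
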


\begin{proof}
We first control the nodal domains. Let \(P_\pm\) be the connected component
of the positive set \(\{U_\varepsilon>0\}\) that contains \(p_\pm\). By
(6.1), \(U_\varepsilon\) is strictly negative on the Jordan curve
\(\Gamma_\pm=\partial Q_\pm\). Every continuous path from \(p_\pm\) to the
exterior of \(Q_\pm\) must cross \(\Gamma_\pm\). Hence the positive component
cannot cross this curve, and therefore \(P_\pm\subset Q_\pm\). Continuity and
the strict negative margin on the curve also yield a neighborhood of
\(\Gamma_\pm\) on which the function is negative; consequently,
\begin{equation}
  \overline{P_\pm}\Subset Q_\pm.
  \tag{6.3}
\end{equation}
The two Jordan domains are disjoint, so \(P_+\) and \(P_-\) are two distinct
positive nodal domains. On the other hand, (6.1) directly shows that the
negative set is nonempty, and hence there is at least one negative nodal
domain. Thus \(U_\varepsilon\) has at least three nodal domains. Since it
occupies the third spectral position on the full domain, Courant's theorem
gives at most three nodal domains. The nodal domains are therefore precisely
\(P_+\), \(P_-\), and a unique negative nodal domain \(D_-\). In particular,
\begin{equation}
  \{U_\varepsilon>0\}=P_+\cup P_-.
  \tag{6.4}
\end{equation}

Take any \(x\in\mathcal Z(U_\varepsilon)\). By Lemma~2.1, every neighborhood
of \(x\) contains a positive point, and hence
\[
  x\in\overline{\{U_\varepsilon>0\}}
   =\overline{P_+}\cup\overline{P_-}.
\]
Combining this with (6.3) gives (6.2). This already proves that none of the
interior nodal lines meets \(\partial\Omega_\varepsilon\).

It remains to prove the stronger assertion that there are two simple closed
curves. Write \(Z=\mathcal Z(U_\varepsilon)\). Because \(Z\) is compactly
contained in the domain, the local structure of planar nodal sets
\cite{Cheng1976} makes \(Z\) a finite embedded planar graph: a regular point
has local degree \(2\), every singular vertex has degree \(2m\geq4\), and
there are no endpoints of degree \(1\). On each closed-curve component
without singular vertices, insert an artificial vertex of degree \(2\).
Let \(V,E,c,F\) denote, respectively, the numbers of vertices, edges,
connected components of the graph, and connected components of
\(\mathbb R^2\setminus Z\).

Each \(Q_\pm\) contains a zero: the endpoints of a curve joining \(p_\pm\) to
a point of \(\Gamma_\pm\) have opposite signs, so the intermediate value
theorem gives a zero. Since \(Q_+\) and \(Q_-\) are separated, \(c\geq2\).
By the first part of the proof, \(P_+\) and \(P_-\) are two bounded
components of \(\mathbb R^2\setminus Z\), while the remainder is connected.
Indeed, apart from \(P_+,P_-\), and \(Z\), the only part of the domain is the
negative nodal domain \(D_-\). Since \(Z\) lies at a positive distance from
the boundary, \(D_-\) is joined, through a zero-free inner neighborhood of
the boundary, to \(\mathbb R^2\setminus\overline{\Omega_\varepsilon}\) and to
the regions inside all the holes, forming a single component of
\(\mathbb R^2\setminus Z\). Therefore,
\begin{equation}
  F=3.
  \tag{6.5}
\end{equation}

Euler's formula for a finite planar graph is
\begin{equation}
  V-E+F=1+c.
  \tag{6.6}
\end{equation}
Every vertex has degree at least \(2\), so the handshake lemma gives
\begin{equation}
  2E=\sum_v\deg(v)\geq2V,
  \qquad E\geq V.
  \tag{6.7}
\end{equation}
Substituting \(F=3\) into (6.6), we obtain
\(3=1+c+E-V\geq1+c\), and hence \(c\leq2\). Together with \(c\geq2\), this
shows that \(c=2\) and \(E=V\). Equality must therefore hold in (6.7), and
every vertex has degree \(2\); in particular, there is no singular zero of
degree at least \(4\). Each connected component is consequently a compact,
connected, regular real-analytic one-dimensional manifold without boundary,
and hence a real-analytic simple closed curve. The two components lie in
\(Q_+\) and \(Q_-\), respectively. This proves the theorem.
\end{proof}

\begin{remark}\label{rem:reflection-symmetry}
The reflection symmetry of the spectrum is used only to construct and
identify the fixed polygonal counterexample. The smooth approximation in the
next section need not preserve reflection symmetry. As long as the two-sided
spectral gap about the third eigenvalue and the two groups of strict sign
inequalities are preserved, the same Courant count recovers all the
conclusions.
\end{remark}

\section{Smoothing and Proof of the Main Theorem}

Fix one of the domains \(\Omega_\varepsilon\) from
Theorem~\ref{thm:polygonal-counterexample}, abbreviate it as \(\Omega_0\), and
write \(U_0\) for the corresponding third eigenfunction. Set
\begin{equation}
  K=\overline{Q_+}\cup\overline{Q_-}\Subset\Omega_0.
  \tag{7.1}
\end{equation}

\begin{lemma}[Connected smooth inner exhaustion]\label{lem:smooth-exhaustion}
There exist bounded connected \(C^\infty\) domains \(\Omega_m\) such that
\begin{equation}
  K\Subset\Omega_m\Subset\Omega_{m+1}\Subset\Omega_0,
  \qquad
  \bigcup_{m=1}^{\infty}\Omega_m=\Omega_0.
  \tag{7.2}
\end{equation}
For every fixed \(j\),
\begin{equation}
  \lambda_j(\Omega_m)\longrightarrow\lambda_j(\Omega_0).
  \tag{7.3}
\end{equation}
\end{lemma}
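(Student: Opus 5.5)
The proof has two parts: build the smooth domains, then prove eigenvalue convergence by monotonicity plus a density argument.

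\emph{Construction.} Let $d(x)=\operatorname{dist}(x,\partial\Omega_0)$ and take a smooth regularized distance $\rho\in C^\infty(\Omega_0)$ with $c_1d\le\rho\le c_2d$. Such a function exists by Whitney/Stein regularization of $d$, or by mollifying $d$ on a Whitney decomposition. Choose regular values $t_m\downarrow0$ of $\rho$, which Sard's theorem allows, with $t_m$ small enough that $K\subset\{\rho>t_1\}$. Then $\{\rho>t_m\}$ is an open set with $C^\infty$ boundary, compactly contained in $\Omega_0$, increasing in $m$, and exhausting $\Omega_0$.

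\emph{Connectedness.} This step needs care; I expect it to be the main technical obstacle, together with ensuring $\Omega_m\Subset\Omega_{m+1}$. I will take $\Omega_m$ to be the connected component of $\{\rho>t_m\}$ containing a fixed compact connected set $K'\supset K$. Here $K'$ is $K$ joined by a polygonal path through the thin neck, which is possible because $\Omega_0$ is connected and open. Every compact subset of $\Omega_0$ lies in some connected compact set, and that set is eventually contained in $\{\rho>t_m\}$. It therefore lies in the component containing $K'$. Hence $\bigcup\Omega_m=\Omega_0$.

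If compact nesting fails for consecutive indices, I will pass to a subsequence of the $t_m$. Since $\overline{\Omega_m}\subset\{\rho\ge t_m\}\subset\{\rho>t_{m+1}\}$ and $\overline{\Omega_m}$ is connected and meets $K'$, we get $\overline{\Omega_m}\subset\Omega_{m+1}$. Because $\Omega_0$ is polygonal with holes, smoothness of the boundary relies only on regularity of the level set, not on any boundary regularity of $\Omega_0$.

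\emph{Spectral convergence.} Since $H_0^1(\Omega_m)\subset H_0^1(\Omega_{m+1})\subset H_0^1(\Omega_0)$ via zero extension, min--max (2.2) shows that $\lambda_j(\Omega_m)$ is nonincreasing in $m$ and bounded below by $\lambda_j(\Omega_0)$. For the reverse inequality, fix $\delta>0$ and let $u_1,\dots,u_j$ be orthonormal Dirichlet eigenfunctions of $\Omega_0$. By density of $C_c^\infty(\Omega_0)$ in $H_0^1(\Omega_0)$, choose $\tilde u_k\in C_c^\infty(\Omega_0)$ that are $H^1$-close to the $u_k$. Their span is then $j$-dimensional, with Gram matrix close to the identity and maximal Rayleigh quotient at most $\lambda_j(\Omega_0)+\delta$. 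This uses continuity of the Rayleigh quotient on the unit sphere of a finite-dimensional space under small perturbations of a basis.

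The union of the supports is compact in $\Omega_0$, so it lies in $\Omega_m$ for all large $m$ by the exhaustion and compactness. Min--max then gives $\lambda_j(\Omega_m)\le\lambda_j(\Omega_0)+\delta$, and (7.3) follows.
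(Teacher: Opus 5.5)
Your proposal is correct and follows essentially the same route as the paper: a regularized distance with regular values $t_m\downarrow0$, the component of the superlevel set containing $K$ joined through the neck, and then domain monotonicity plus a $C_c^\infty$-density min--max argument for (7.3). Your check that $\overline{\Omega_m}\subset\{\rho\ge t_m\}\subset\{\rho>t_{m+1}\}$ shows that nesting holds automatically for strictly decreasing $t_m$, which the paper only asserts. The one thing to tidy is to choose $t_1<\min_{K'}\rho$ rather than merely $\min_K\rho$, so that the connected set $K'$ lies in a single component of $\{\rho>t_m\}$ for every $m$.
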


\begin{proof}
The boundary of the polygonal domain \(\Omega_0\) has only finitely many
edges and finitely many corners, and the thin neck has fixed positive width.
Choose a smooth regularized distance function \(d_*\) that is comparable
from above and below in \(\Omega_0\) with
\(\operatorname{dist}(x,\partial\Omega_0)\). Choose regular values
\(t_m\downarrow0\), and let \(\Omega_m\) be the connected component of
\(\{d_*>t_m\}\) that contains the fixed compact set \(K\) and the centerline
of the thin neck. For all sufficiently small \(t_m\), only a thin boundary
layer is removed; the set remains connected through the fixed-width thin
neck and retains every hole. By discarding finitely many initial indices and
choosing the decreasing regular values appropriately, the sets can be made
nested. The regular value theorem gives
\(\partial\Omega_m\in C^\infty\), while the distance comparison gives the
exhaustion relations (7.2).

By domain monotonicity, \(\lambda_j(\Omega_m)\) decreases monotonically with
\(m\) and is bounded below by \(\lambda_j(\Omega_0)\). It therefore has a
limit \(\ell_j\geq\lambda_j(\Omega_0)\). To prove the reverse inequality,
fix \(\tau>0\). By the min--max formula, one can choose a \(j\)-dimensional
subspace \(E\subset H_0^1(\Omega_0)\) whose maximal Rayleigh quotient is less
than \(\lambda_j(\Omega_0)+\tau/2\). Using the density of
\(C_c^\infty(\Omega_0)\) in \(H_0^1(\Omega_0)\), approximate the members of a
basis of \(E\) one by one. If the approximation is sufficiently close, the
resulting \(j\)-dimensional smooth compactly supported subspace has maximal
Rayleigh quotient less than \(\lambda_j(\Omega_0)+\tau\). The union of the
finitely many supports is a compact subset of \(\Omega_0\), and is therefore
eventually contained in some \(\Omega_m\). Testing with this subspace in the
min--max formula for \(\Omega_m\) yields
\[
  \lambda_j(\Omega_m)\leq\lambda_j(\Omega_0)+\tau.
\]
Letting \(m\to\infty\) and then \(\tau\downarrow0\) proves (7.3).
\end{proof}

\begin{lemma}[Local convergence of a simple eigenfunction under inner
exhaustion]\label{lem:eigenfunction-convergence}
Let \(U_m\) be an \(L^2\)-normalized real eigenfunction corresponding to the
third eigenvalue on \(\Omega_m\), and extend it by zero to \(\Omega_0\). If
its sign is fixed by
\(\int_{\Omega_m}U_mU_0>0\), then
\begin{equation}
  U_m\longrightarrow U_0 \quad\text{in }L^2(\Omega_0),
  \qquad
  U_m\longrightarrow U_0 \quad\text{in }C^1(K).
  \tag{7.4}
\end{equation}
\end{lemma}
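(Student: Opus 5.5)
The plan follows the template of Proposition~4.3, with the thin-neck half-domains replaced by the exhaustion $\Omega_m$. First, I would fix the spectral picture. By Lemma~\ref{lem:smooth-exhaustion} with $j=2,3,4$ and the gap $\lambda_2(\Omega_0)<\lambda_3(\Omega_0)<\lambda_4(\Omega_0)$ from Theorem~\ref{thm:polygonal-counterexample}, for all large $m$ we have $\lambda_2(\Omega_m)<\lambda_3(\Omega_m)<\lambda_4(\Omega_m)$. So the third eigenvalue is simple and $U_m$ is determined up to sign. Extending by zero gives $U_m\in H_0^1(\Omega_0)$ with $\|U_m\|_{L^2(\Omega_0)}=1$ and $\|\nabla U_m\|_2^2=\lambda_3(\Omega_m)\le\lambda_3(\Omega_1)$. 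Hence the family is bounded in $H_0^1(\Omega_0)$.

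Next comes compactness and identification of the limit. Take any subsequence. By Rellich on the bounded domain $\Omega_0$, a further subsequence converges weakly in $H_0^1(\Omega_0)$ and strongly in $L^2(\Omega_0)$ to some $\Psi$ with $\|\Psi\|_2=1$. Here no trace argument is needed, unlike in (4.8), because the zero extensions already lie in $H_0^1(\Omega_0)$, which is weakly closed. For $\zeta\in C_c^\infty(\Omega_0)$, the support of $\zeta$ lies in $\Omega_m$ for large $m$ by (7.2). We can therefore test the weak eigen-equation on $\Omega_m$ and pass to the limit using (7.3), obtaining $-\Delta\Psi=\lambda_3(\Omega_0)\Psi$ weakly with $\Psi\in H_0^1(\Omega_0)$. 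Simplicity of $\lambda_3(\Omega_0)$ gives $\Psi=\pm U_0$, with $U_0$ normalized.

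The sign normalization is the only delicate point, though it is mild. Strong $L^2$ convergence gives $|\int U_mU_0|\to1$ along every subsequence, hence for the whole family. So the sign condition is well defined for large $m$. It forces every subsequential limit to satisfy $\int\Psi U_0=1$, i.e.\ $\Psi=U_0$, and uniqueness of the limit yields the full $L^2$ convergence in (7.4).

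Finally, I would upgrade to $C^1(K)$ exactly as in the proof of Proposition~4.3. Choose $K\Subset K_1\Subset K_2\Subset\Omega_0$; by (7.2) and compactness, $K_2\subset\Omega_m$ for large $m$. Set $z_m=U_m-U_0$, so that $-\Delta z_m=\lambda_3(\Omega_m)z_m+(\lambda_3(\Omega_m)-\lambda_3(\Omega_0))U_0=:f_m$ on $K_2$, and $f_m\to0$ in $L^2(K_2)$. Interior $H^2$ estimates then give $z_m\to0$ in $H^2(K_1)$, so $f_m\to0$ in $H^1(K_1)$. The $H^3$ estimate gives $z_m\to0$ in $H^3(K)$, and the embedding $H^3(K)\hookrightarrow C^1(K)$ in two dimensions finishes the proof. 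I expect no genuine obstacle. The main care is to ensure that the enlarged compact sets $K_2$ are eventually inside $\Omega_m$, which the exhaustion (7.2) guarantees.
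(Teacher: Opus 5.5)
Your proposal is correct and follows essentially the same route as the paper. The shared steps are: uniform $H_0^1(\Omega_0)$ bounds on the zero extensions, Rellich compactness, and identification of every subsequential limit as $\pm U_0$ via test functions eventually supported in $\Omega_m$ and simplicity of $\lambda_3(\Omega_0)$. The sign condition then selects $+U_0$, and the interior $H^2$/$H^3$ bootstrap from Proposition~4.3 gives the $C^1(K)$ upgrade. Your extra step showing that $\lambda_3(\Omega_m)$ is simple for large $m$ is harmless but not needed here, since the argument only uses simplicity of $\lambda_3(\Omega_0)$; the paper derives the gap on $\Omega_m$ separately, in the proof of Theorem~1.2.
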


\begin{proof}
By Lemma~\ref{lem:smooth-exhaustion}, \(\lambda_3(\Omega_m)\) is bounded. The
zero extensions of the \(U_m\) belong to \(H_0^1(\Omega_0)\) and satisfy
\[
  \lVert U_m\rVert_{L^2(\Omega_0)}=1,
  \qquad
  \lVert\nabla U_m\rVert_{L^2(\Omega_0)}^2=\lambda_3(\Omega_m).
\]
Thus every subsequence has a further subsequence converging weakly in
\(H_0^1(\Omega_0)\) and strongly in \(L^2(\Omega_0)\) to some normalized
function \(W\). For any \(\zeta\in C_c^\infty(\Omega_0)\), its support is
eventually contained in \(\Omega_m\), so \(\zeta\) may be used as a test
function in the weak eigenvalue equation for \(U_m\). Letting
\(m\to\infty\) and using (7.3), we obtain
\[
  -\Delta W=\lambda_3(\Omega_0)W.
\]
This eigenvalue is simple by Proposition~5.2, so \(W=U_0\) or
\(W=-U_0\); the choice of orientation excludes the minus sign. Since all
subsequential limits coincide, the full sequence converges in \(L^2\).

For all sufficiently large \(m\), \(K\Subset\Omega_m\). On a fixed slightly
larger neighborhood of \(K\), the functions \(U_m\) and \(U_0\) satisfy
Helmholtz equations with the same coefficients and with convergent
eigenvalues. The same interior elliptic estimates and Sobolev embedding as
in the proof of Proposition~4.3 upgrade the \(L^2\)-convergence to
\(C^1(K)\)-convergence.
\end{proof}

\begin{proof}[Proof of Theorem~1.2]
Write the two-sided spectral gap of the fixed polygonal counterexample as
\begin{equation}
  g=\min\bigl\{
    \lambda_3(\Omega_0)-\lambda_2(\Omega_0),
    \lambda_4(\Omega_0)-\lambda_3(\Omega_0)
  \bigr\}>0.
  \tag{7.5}
\end{equation}
By (7.3), for all sufficiently large \(m\),
\begin{equation}
  \lambda_3(\Omega_m)-\lambda_2(\Omega_m)>\frac{g}{2},
  \qquad
  \lambda_4(\Omega_m)-\lambda_3(\Omega_m)>\frac{g}{2}.
  \tag{7.6}
\end{equation}
Thus the third eigenvalue of \(\Omega_m\) is simple.

On the other hand, Lemma~\ref{lem:eigenfunction-convergence} and (6.1) show
that, for the same sufficiently large \(m\),
\begin{equation}
  U_m(p_\pm)>0,
  \qquad
  \max_{\Gamma_\pm}U_m<0.
  \tag{7.7}
\end{equation}
We may now repeat verbatim the nodal-domain argument from
Theorem~\ref{thm:polygonal-counterexample}: two positive nodal domains are
confined separately within \(Q_+\) and \(Q_-\), the negative set is nonempty,
and Courant's theorem bounds the total number of nodal domains by three. The
interior-zero sign-change lemma therefore gives
\[
  \mathcal Z(U_m)
    \subset\overline{P_{m,+}}\cup\overline{P_{m,-}}
    \Subset Q_+\cup Q_-\Subset\Omega_m.
\]
Applying the same finite analytic planar-graph argument and Euler count to
this compact interior nodal set shows that it has exactly two connected
components and that neither component has a singular vertex. Consequently,
both are real-analytic simple closed curves. Choose one such sufficiently
large \(m\) and set \(\Omega=\Omega_m\) and \(U=U_m\). This yields (1.4),
(1.5), and the conclusion concerning the two closed curves simultaneously.
\end{proof}

\end{document}